\let\RE\Re
\let\Re=\undefined
\DeclareMathOperator{\Re}{\RE e}
\let\IM\Im
\let\Im=\undefined
\DeclareMathOperator{\Im}{\IM m}
\newcommand{\R}{\mathbbm R}
\newcommand{\SQ}{S}
\newcommand{\Di}{\Omega_{int}}
\newcommand{\De}{\Omega_{ext}}
\newcommand{\Hi}{\bm H^{int}}
\newcommand{\Ei}{\bm E^{int}}
\newcommand{\He}{\bm H^{ext}}
\newcommand{\Ee}{\bm E^{ext}}
\newcommand{\hi}{\bm h^{int}}
\newcommand{\ei}{\bm e^{int}}
\newcommand{\he}{\bm h^{ext}}
\newcommand{\ee}{\bm e^{ext}}
\newcommand{\Hin}{\bm H^{inc}}
\newcommand{\Ein}{\bm E^{inc}}
\newcommand{\hin}{\bm h^{inc}}
\newcommand{\ein}{\bm e^{inc}}
\newcommand{\Hsc}{\bm H^{sc}}
\newcommand{\Esc}{\bm E^{sc}}
\newcommand{\hsc}{\bm h^{sc}}
\newcommand{\esc}{\bm e^{sc}}
\newcommand{\n}{\bm{\hat{n}}}
\newcommand{\di}{\bm{\hat{d}}}
\newcommand{\p}{\bm{\hat{p}}}
\newcommand{\cee}{e_3^{ext}}
\newcommand{\che}{h_3^{ext}}
\newcommand{\cei}{e_3^{int}}
\newcommand{\chii}{h_3^{int}}
\newcommand{\ta}{\bm{\hat{\tau}}}
\let\div=\undefined
\DeclareMathOperator{\div}{\nabla \cdot}
\DeclareMathOperator{\curl}{\nabla \times}
\newaliascnt{proposition}{lemma}
\newaliascnt{theorem}{lemma}
\newtheorem{theorem}[theorem]{Theorem}
\newaliascnt{assumption}{lemma}
\newaliascnt{remark}{lemma}
\theoremstyle{nonumberplain}
\newtheorem{proof}{Proof}
\newcommand{\logmessage}[1]{\@latex@warning{#1}}
\providecommand{\keywords}[1]{\small\textbf{Keywords} #1}
\begin{document}

\title{The direct scattering problem of obliquely incident electromagnetic waves by a penetrable homogeneous cylinder}

\author[1]{Drossos Gintides\thanks{dgindi@math.ntua.gr}}
\author[2]{Leonidas Mindrinos\thanks{leonidas.mindrinos@univie.ac.at}}
\affil[1]{\small Department of Mathematics, National Technical University of Athens, Greece.}
\affil[2]{Computational Science Center, University of Vienna,  Austria.}

\renewcommand\Authands{ and }
\normalsize

\date{}

\maketitle

\begin{abstract}
In this paper we consider the direct scattering problem of obliquely incident time-harmonic electromagnetic plane waves  by an infinitely long dielectric cylinder.  We assume that the cylinder and the outer medium are homogeneous and isotropic. From the symmetry of the problem, Maxwell's equations are reduced to a system of two 2D Helmholtz equations in the cylinder and two Helmholtz equations in the exterior domain coupled on the boundary. We prove uniqueness and existence of this differential system by formulating an equivalent system of integral equations using the direct method.  We transform this system into a Fredholm type system of boundary  integral equations taking advantage of Maue's formula for hypersingular operators. Applying a collocation  method we derive an efficient numerical scheme and provide accurate numerical results using as test cases transmission problems corresponding to  analytic fields derived from  fundamental solutions.

\vspace{0.2cm}
\keywords{direct electromagnetic scattering, oblique incidence, integral equation method, hypersingular operator}
\end{abstract}

    \section{Introduction}
    An interesting area of electromagnetism for its applications and the arising theoretical problems is  the scattering process from obliquely incident time-harmonic plane waves by an infinitely long cylinder. The basic waves in the propagation domain satisfy Maxwell's equations \cite{CakCol06, ColKre83, Mon03, Ned01} and due to the symmetry of the problem it is equivalent to find two scalar fields satisfying  a pair of two-dimensional  Helmholtz equations with different wavenumbers. The complication appears in the boundary conditions. Even for the case of a perfect conductor, in the boundary conditions appear tangential derivatives which make the analysis more difficult.
     There are many studies providing analytical  or numerical solutions \cite{CanLee91, LucPanSche10, Roj88, Tsa07, TsiAliAnaKak07, Wai55, YanGorKis95, YouEls97}.  The proposed methods are based on specific geometries or well known numerical schemes without examining the well-posedness of the corresponding boundary value problem.
     
     Recently, Wang and Nakamura \cite{WanNak12}  used a more elegant theoretical analysis to prove well-posedness of the problem based on the integral equation approach. They proved theoretical and numerical results for the case of homogeneous impedance cylinder using  integral equations. For the theoretical analysis they used properties of the Cauchy singular  integrals and proved that the derived system is of Fredholm type with index zero. For the numerical results they applied a specific decomposition of  the kernels and formulations using  Hilbert's and Symm's integral operators. Considering trigonometric interpolation, they  introduced an efficient numerical scheme.  
     
     The case for general dielectric cylinders is not considered yet, however the same authors, in a later work \cite{NakWan13},  investigated a more complicated model having also a non-homogeneous part, in the sense that the   permittivity and the permeability of the exterior medium are non-constants and smooth in a bounded domain surrounding the cylinder. The main theoretical analysis providing uniqueness and existence in  non-homogeneous   materials is much harder. For the well-posedness they used  the Lax-Phillips method \cite{Isa06}.
     
     In this work, we examine the case of infinite dielectric cylinder illuminated by a transverse magnetic polarized electromagnetic plane wave, known as  oblique incidence. More precisely, in the second Section starting from Maxwell's equations we describe initially the derivation of the  mathematical model for the scattering process from obliquely incident time-harmonic plane waves for the case of infinite inhomogeneous cylinder. We assume that transmission conditions hold on the boundary. The  boundary conditions involve normal and tangential derivatives of the fields.  
     
     In Section 3, we formulate the direct problem in differential form. We derive the Helmholtz equations and the exact form of the boundary conditions in the case of homogeneous cylinder. We prove that the problem is uniquely solvable using Green's formulas and Rellich's lemma. Considering the direct method, initially applied in transmission problems in \cite{CosSte85, KitKle75, KleMar88}, we formulate the problem into an equivalent system of integral equations. We show that this system is of Fredholm type in an appropriate Sobolev space setting. Due to uniqueness of the boundary value problem existence follows from Fredholm alternative. The system consists of compact,  singular and  hypersingular operators. We consider Maue's formula \cite{Mau49}, as in the case of the normal derivative of the double layer potential, to reduce the hypersingularity of the tangential derivative of the double layer potential.

      In the last Section we investigate numerically the problem by a collocation method based on Kress's method for two dimensional integral equation with strongly singular operators \cite{Kre95}. We transform the system of integral equations to a linear system by parametrizing the operators and considering well-known quadrature rules. We derive accurate numerical results for the four fields, interior and exterior, and we compute numerically the far-field patterns of the two exterior fields computed for a specific boundary value problem. Namely, we consider boundary data corresponding to analytic fields derived from point sources, where the interior and exterior fields have singularities outside of their domain of consideration.
     
     \section{Formulation of the direct scattering problem for an inhomogeneous cylinder}
     We consider the scattering problem of an electromagnetic wave by a penetrable cylinder in $\R^3$. Let $\bm x = (x,y,z) \in \R^3 ,$ then we model the cylinder as 
$\Di = \{ \bm x : (x,y) \in \Omega , z \in \R \},$
where $\Omega$ is a bounded domain in $\R^2$ with smooth boundary $\Gamma .$ The cylinder $\Di$ is oriented parallel to the $z$-axis and $\Omega$ is its horizontal cross section. We assume constant permittivity $\epsilon_0$ and permeability $\mu_0$ for the exterior domain $\De : = \R^3 \setminus \overline{\Omega}_{int}.$ The interior domain $\Di$ is characterized by the electric constants $\mu (\bm x )= \mu (x,y)$ and $\epsilon (\bm x)= \epsilon (x,y)$ for all $(x,y) \in \Omega, \, z \in \R .$ 

We define for $\bm x \in \De, \, t\in \R$ the magnetic field $\He (\bm x , t)$ and electric field $\Ee (\bm x , t)$ and equivalently the interior fields  $\Hi (\bm x , t)$ and $\Ei (\bm x , t)$ for $\bm x \in \Di, \, t\in \R .$ Then, these fields satisfy the Maxwell's equations
\begin{equation}\label{eq_Maxwell}
  \begin{aligned}
\curl \Ee + \mu_0 \frac{\partial \He}{\partial t} &= 0, & \curl \He - \epsilon_0 \frac{\partial \Ee}{\partial t} &= 0,  & \bm x \in \De ,\\
\curl \Ei + \mu \frac{\partial \Hi}{\partial t} &= 0, & \curl \Hi - \epsilon \frac{\partial \Ei}{\partial t} &= 0,  
& \bm x \in \Di . 
\end{aligned}
\end{equation}
On the boundary $\Gamma ,$ we consider transmission conditions
\begin{equation*}
\n \times \Ei  =   \n \times \Ee, \quad \n \times \Hi  =   \n \times \He,  \quad \bm x\in \Gamma ,
\end{equation*}
where $\n$ is the outward normal vector, directed into $\De$. 

In order to take advantage of the symmetry of the specific medium, we probe the cylinder with an incident transverse magnetic (TM) polarized electromagnetic plane wave, the so-called oblique incidence in the literature. An arbitrary time-harmonic incident electromagnetic plane wave has the form:
\begin{equation*}
\begin{aligned}
\Ein (\bm x,t ;\di , \p ) &= \frac1{k_0^2 \sqrt{\epsilon_0}} \curl \curl \left( \p \, e^{ik_0 \bm x \cdot  \di}\right) e^{-i\omega t}, \\
\Hin (\bm x,t ;\di , \p ) &= \frac1{ik_0 \sqrt{\mu_0}} \curl \left( \p \, e^{ik_0 \bm x \cdot  \di}\right) e^{-i\omega t},
\end{aligned} 
\end{equation*}
where $\omega >0$ is the frequency, $k_0 = \omega \sqrt{\mu_0 \epsilon_0}$ is the wave number in the exterior domain, $\p$ is the polarization vector and $\di$ the vector describing the incident direction, satisfying $\di \perp \p .$ 

In the following, due to the linearity of the problem we suppress the time-dependence and we consider the fields only as functions of the space variable $\bm x$. In order to describe the incident fields for the specific TM polarization, we define by $\theta$  the incident angle with respect to the negative $z$ axis and by $\phi$ the polar angle of $\di$ (in spherical coordinates), then, $\di = (\sin \theta \cos \phi , \sin \theta \sin \phi , -\cos \theta )$ and $\p = (\cos \theta \cos \phi , \cos \theta \sin \phi , \sin \theta ), $ assuming that $\theta \in (0,\pi /2) \cup (\pi/2 , \pi) .$ Hence, we obtain
\begin{equation*}
\begin{aligned}
\Ein (\bm x ;\di , \p ) &= \frac1{\sqrt{\epsilon_0}} \, \di \times \p \times \di \, e^{ik_0 \bm x \cdot  \di} = \frac1{\sqrt{\epsilon_0}} \, \p  \, e^{ik_0 \bm x \cdot  \di}, \\
\Hin (\bm x  ;\di , \p ) &= \frac1{\sqrt{\mu_0}} \di \times \p \, e^{ik_0 \bm x \cdot  \di} = \frac1{\sqrt{\mu_0}} (\sin \phi , -\cos \phi ,0) \, e^{ik_0 \bm x \cdot  \di}.
\end{aligned} 
\end{equation*}

Taking into account the cylindrical symmetry of the medium and the $z$-independence of the electric coefficients we express the incident fields as separable functions of $(x,y)$ and $z.$ Thus, we define $\beta = k_0 \cos \theta$ and $\kappa_0 = \sqrt{k_0^2 - \beta^2} = k_0 \sin \theta$ and it follows that the incident fields can be decomposed to
\begin{equation}\label{eq_incident3}
\begin{aligned}
\Ein (\bm x ;\di , \p ) = \ein (x,y) \, e^{-i\beta z}, \quad
\Hin (\bm x  ;\di , \p ) = \hin (x,y) \, e^{-i\beta z},
\end{aligned} 
\end{equation}
where
\begin{align*}
\ein (x,y) &= \frac1{\sqrt{\epsilon_0}} \, \p  \, e^{i\kappa_0 (x \cos \phi + y \sin \phi )}, \\
\hin (x,y) &= \frac1{\sqrt{\mu_0}} \, (\sin \phi , -\cos \phi ,0) \, e^{i\kappa_0 (x \cos \phi + y \sin \phi )}.
\end{align*}

Now, we are in position to transform equations \eqref{eq_Maxwell} into a system of equations only for the $z$-component of the electric and magnetic fields. Firstly, we see that for the specific illumination of the form \eqref{eq_incident3}, using separation of variables, also the scattered fields take the form:
\begin{equation*}
\begin{aligned}
\Esc (\bm x ;\di , \p ) = \esc (x,y) \, e^{-i\beta z}, \quad 
\Hsc (\bm x  ;\di , \p ) = \hsc (x,y) \, e^{-i\beta z}, \quad \bm x \in \De,
\end{aligned} 
\end{equation*}
where $\esc = ( e_1^{sc}, e_2^{sc},e_3^{sc} )$ and $\hsc = ( h_1^{sc} , h_2^{sc} ,h_3^{sc} ) .$ Then, the exterior fields are given by
\begin{equation*}
\begin{aligned}
\Ee (\bm x ;\di , \p ) &= \left( \esc (x,y) + \ein (x,y) \right) \, e^{-i\beta z} = \ee (x,y) \, e^{-i\beta z}, & \bm x \in \De ,\\
\He (\bm x  ;\di , \p ) &= \left( \hsc (x,y) + \hin (x,y) \right) \, e^{-i\beta z} = \ee (x,y) \, e^{-i\beta z}, & \bm x \in \De .
\end{aligned} 
\end{equation*}
Equivalently, the interior fields are represented by
\begin{equation*}
\begin{aligned}
\Ei (\bm x ;\di , \p ) = \ei (x,y) \, e^{-i\beta z}, \quad 
\Hi (\bm x  ;\di , \p ) = \hi (x,y) \, e^{-i\beta z}, \,\, \bm x \in \Di ,
\end{aligned} 
\end{equation*}
where $\ei  = ( e_1^{int} , e_2^{int}  , e_3^{int} )$ and $\hi  = ( h_1^{int} , h_2^{int} , h_3^{int} ) .$

For any field of the form
\begin{equation*}
\begin{aligned}
\bm E (\bm x ;\di , \p ) = \bm e (x,y) \, e^{-i\beta z}, \quad
\bm H (\bm x  ;\di , \p ) = \bm h (x,y) \, e^{-i\beta z}, \quad \bm x \in \R^3 ,
\end{aligned} 
\end{equation*}
we consider the Maxwell's equations in $\R^3$ for arbitrary $\epsilon , \mu$ and $k^2 =  \mu \epsilon \omega^2 - \beta^2$ (remark here the space dependence of $\epsilon, \mu ).$ Then, following \cite{NakWan13} we obtain the relations
\begin{equation}\label{eq_variables}
\begin{aligned}
e_1  (x,y) = -\frac1{k^2} \left( i\beta \frac{\partial e_3}{\partial x} (x,y) - i\mu \omega \frac{\partial h_3}{\partial y} (x,y)\right), \\
e_2  (x,y) = -\frac1{k^2} \left( i\beta \frac{\partial e_3}{\partial y} (x,y) + i\mu \omega \frac{\partial h_3}{\partial x} (x,y)\right), \\
h_1  (x,y) = -\frac1{k^2} \left( i\beta \frac{\partial h_3}{\partial x} (x,y) + i\epsilon \omega \frac{\partial e_3}{\partial y} (x,y)\right), \\
h_2  (x,y) = -\frac1{k^2} \left( i\beta \frac{\partial h_3}{\partial y} (x,y) - i\epsilon \omega \frac{\partial e_3}{\partial x} (x,y)\right). \\
\end{aligned}
\end{equation}

 Substituting \eqref{eq_variables} in \eqref{eq_Maxwell}, we have that the pair $(e_3 , h_3)$ satisfies the equations
\begin{equation*}
\begin{aligned}
\frac{k^2}{\epsilon \omega } \div \left( \frac{\epsilon \omega}{k^2} \nabla e_3\right) + \frac{k^2}{\epsilon \omega } J  \, \nabla \left( \frac{\beta}{k^2}\right) \cdot \nabla h_3 + k^2 e_3 &= 0, \\
  \frac{k^2}{\mu \omega } \div \left( \frac{\mu \omega}{k^2} \nabla h_3\right) - \frac{k^2}{\mu \omega } J  \, \nabla \left( \frac{\beta}{k^2}\right) \cdot \nabla e_3 + k^2 h_3 &= 0,
\end{aligned}
\end{equation*}
where
\[
\bm J = \begin{pmatrix}
\phantom{-}0 & 1\\ -1 & 0
\end{pmatrix} .
\]

The interior and the exterior domains are characterized by different wavenumbers, given by
\begin{equation*}
k^2 (\bm x)= \left\{
     \begin{array}{lr}
     k_{int}^2 (\bm x) := \mu (x,y)\, \epsilon (x,y)\, \omega^2 - \beta^2 ,  & \bm x \in \Di ,\\
        k_{ext}^2 (\bm x) := \mu_0 \epsilon_0\omega^2 - \beta^2 = \kappa^2_0,  & \bm x \in \De .
     \end{array}
   \right.
\end{equation*}

In this section, for completeness in the formulation of the direct problem we keep the space dependence of $k_{int}.$ Later, we consider only the case of constant parameters. Here, we have to assume that $\mu (\bm x)\epsilon (\bm x) > \epsilon_0 \mu_0 \cos \theta$
in order to have $\inf_{\bm x} k_{int}^2 (\bm x) >0.$ Thus, the fields $\cee (x,y)$ and $\che (x,y)$ satisfy
\begin{equation}\label{maxwell_ext}
\begin{aligned}
\Delta \cee + \kappa^2_0 \,\cee = 0, \quad 
\Delta \che + \kappa^2_0 \,\che = 0, \quad \bm x \in \De ,
\end{aligned}
\end{equation}
and the interior fields
\begin{equation}\label{maxwell_int}
\begin{aligned}
\frac{k_{int}^2 (\bm x)}{\epsilon (\bm x)} \div \left( \frac{\epsilon (\bm x)}{k_{int}^2 (\bm x) } \nabla \cei \right) + \frac{k_{int}^2 (\bm x)}{\epsilon (\bm x) \omega } \bm J  \, \nabla \left( \frac{\beta}{k_{int}^2 (\bm x)}\right) \cdot \nabla \chii    
+ k_{int}^2 (\bm x) \, \cei = 0, \quad \bm x &\in \Di , \\
 \frac{k_{int}^2 (\bm x)}{\mu (\bm x)} \div \left( \frac{\mu (\bm x)}{k_{int}^2 (\bm x) } \nabla \chii \right) - \frac{k_{int}^2 (\bm x)}{\mu (\bm x) \omega } \bm J  \, \nabla \left( \frac{\beta}{k_{int}^2 (\bm x)}\right) \cdot \nabla \cei 
 + k_{int}^2 (\bm x) \, \chii = 0, \quad \bm x &\in \Di .
\end{aligned}
\end{equation}

Now, we are going to derive the exact form of the boundary conditions. We introduce the notations: $\bm e_t  = \bm{\hat{x}} \, e_1 + \bm{\hat{y}} \, e_2 , \, \bm h_t  = \bm{\hat{x}} \, h_1 + \bm{\hat{y}} \, h_2 ,$ and $\nabla_t = \bm{\hat{x}} \tfrac{\partial}{\partial x}+\bm{\hat{y}} \frac{\partial}{\partial y},$ where $\bm{\hat{x}}, \bm{\hat{y}}, \bm{\hat{z}}$ denote the unit vectors in $\R^2 .$ Let $(\n , \ta , \bm{\hat{z}})$ be a local coordinate system, where $\n = (n_1 , n_2)$ is the outward normal vector and $\ta = (-n_2 , n_1)$ the outward tangent vector on $\Gamma .$ Then, from \eqref{eq_variables} we obtain
\begin{equation}\label{eq_tangential}
\begin{aligned}
\ta \cdot \bm e_t &= -\frac1{k^2} \left( i\mu \omega \n \cdot \nabla_t h_3 + i\beta \ta \cdot \nabla_t e_3 \right), \\
\ta \cdot \bm h_t &= -\frac1{k^2} \left( -i\epsilon \omega \n \cdot \nabla_t e_3 + i\beta \ta \cdot \nabla_t h_3 \right), \\ 
\end{aligned}
\end{equation}
using that $\ta \cdot \left( \bm{\hat{z}} \times \nabla_t \cdot \right) = \n \cdot \nabla_t \cdot .$ 

We observe, setting zero to the $z$-component of $\n , \ta$ in $\R^3 ,$ that
\begin{equation*}
\begin{aligned}
\n \times \bm E = - e_3 \ta + (n_1 e_2 - n_2 e_1 ) \, \bm{\hat{z}}, \quad
\n \times \bm H = - h_3 \ta + (n_1 h_2 - n_2 h_1 ) \, \bm{\hat{z}}.
\end{aligned}
\end{equation*}
Then from \eqref{eq_variables} and \eqref{eq_tangential}, we derive
\begin{equation*}
\begin{aligned}
\n \times \Ee = - \cee \ta + \ta \cdot \ee_t \, \bm{\hat{z}}, \quad
\n \times \He = - \che \ta + \ta \cdot \he_t \, \bm{\hat{z}},
\end{aligned}
\end{equation*}
for the exterior fields, where $\ee_t := \bm{\hat{x}} \, e^{ext}_1 + \bm{\hat{y}} \, e^{ext}_2 , \,\he_t := \bm{\hat{x}} \, h^{ext}_1 + \bm{\hat{y}} \, h^{ext}_2 ,$  and
\begin{equation*}
\begin{aligned}
\n \times \Ei = - \cei \ta + \ta \cdot \ei_t \, \bm{\hat{z}},\quad
\n \times \Hi = - \chii \ta + \ta \cdot \hi_t \, \bm{\hat{z}}.
\end{aligned}
\end{equation*}
for the interior fields, where $\ei_t := \bm{\hat{x}} \, e^{int}_1 + \bm{\hat{y}} \, e^{int}_2 , \,\hi_t := \bm{\hat{x}} \, h^{int}_1 + \bm{\hat{y}} \, h^{int}_2$. 

Here, we observe that the tangential forms of the fields can be written in terms of $\ta$ and $\bm{\hat{z}},$ two linear independent vectors. Thus, the boundary condition
\[
\n \times \Ei  =   \n \times \Ee , \quad \bm x\in \Gamma ,
\]
is equivalent to the system
\begin{equation*}
\begin{aligned}
\cei = \cee ,   \quad
\ta \cdot \ei_t = \ta \cdot \ee_t, \quad \bm x\in \Gamma ,
\end{aligned}
\end{equation*}
and equivalently for the magnetic fields
\begin{equation*}
\begin{aligned}
\chii = \che , \quad
\ta \cdot \hi_t = \ta \cdot \he_t, \quad \bm x\in \Gamma .
\end{aligned}
\end{equation*}
We define
\[
\frac{\partial}{\partial n } = \n \cdot \nabla_t , \quad \frac{\partial}{\partial \tau } = \ta \cdot \nabla_t
\]
and we rewrite the above boundary conditions as
\begin{equation}\label{boundary_electric}
\begin{aligned}
\cei &= \cee , & \bm x\in \Gamma , \\
\frac{\mu (\bm x)}{k_{int}^2 (\bm x)} \omega \frac{\partial \chii}{\partial n }  + \frac{\beta}{k_{int}^2 (\bm x)} \frac{\partial \cei}{\partial \tau } &= \frac{\mu_0}{\kappa^2_0} \omega \frac{\partial \che}{\partial n }  + \frac{\beta}{\kappa^2_0} \frac{\partial \cee}{\partial \tau }, & \bm x\in \Gamma ,
\end{aligned}
\end{equation}
and
\begin{equation}\label{boundary_magnetic}
\begin{aligned}
\chii &= \che , & \bm x\in \Gamma , \\
\frac{\epsilon (\bm x)}{k_{int}^2 (\bm x)} \omega \frac{\partial \cei}{\partial n }  - \frac{\beta}{k_{int}^2 (\bm x)} \frac{\partial \chii}{\partial \tau } &= \frac{\epsilon_0}{\kappa^2_0} \omega \frac{\partial \cee}{\partial n }  - \frac{\beta}{\kappa^2_0} \frac{\partial \che}{\partial \tau }, & \bm x\in \Gamma .
\end{aligned}
\end{equation}

To ensure that the scattered fields are outgoing, the components must satisfy in addition the radiation conditions in $\R^2:$
\begin{equation}\label{radiation}
\begin{aligned}
\lim_{r \rightarrow \infty} \sqrt{r} \left( \frac{\partial e_3^{sc}}{\partial r} - i\kappa_0 e_3^{sc} \right) =0 , \quad
 \lim_{r \rightarrow \infty} \sqrt{r} \left( \frac{\partial h_3^{sc}}{\partial r} - i\kappa_0 h_3^{sc} \right) =0 , \\
\end{aligned}
\end{equation}   
where $r = |(x,y)|,$ uniformly over all directions.

Thus, the direct transmission problem for oblique incident wave, is to find the fields $\chii , h_3^{sc}, \cei$ and $e_3^{sc}$ which satisfy equations \eqref{maxwell_ext} and \eqref{maxwell_int}, the transmission conditions \eqref{boundary_electric} and \eqref{boundary_magnetic} and the radiation conditions \eqref{radiation}.

We remark here that since we consider TM polarized wave, see equation \eqref{eq_incident3}, the incident fields for $\bm x \in \De$ are simplified to
\begin{equation}\label{incident_el}
\begin{aligned}
e_3^{inc} (x,y) = \frac1{\sqrt{\epsilon_0}} \sin \theta \, e^{i\kappa_0 (x \cos \phi +y \sin \phi )}, \quad
h_3^{inc} (x,y) = 0.
\end{aligned}
\end{equation}
     
      \section{The direct problem for a homogeneous cylinder using the integral equation method}
      From now on, $\bm x \in\R^2 .$ In this section we consider the simplified version where $\mu (\bm x) = \mu_1$ and $\epsilon (\bm x) = \epsilon_1$ are constant in the interior domain. To simplify the following analysis, we set $\Omega_1 = \Omega \subset \R^2, \, \Omega_0 = \R^2 \setminus \Omega$ and 
\begin{equation*}
\begin{aligned}
u_0 (\bm x) &= e_3^{sc} (\bm x), & v_0 (\bm x) &= h_3^{sc} (\bm x), &\bm x \in \Omega_0 ,\\
 u_1 (\bm x) &= \cei (\bm x),  & v_1 (\bm x) &= \chii (\bm x), &\bm x \in \Omega_1 .
\end{aligned}
\end{equation*}
In the following, $j = 0,1$ counts for the exterior ($\bm x\in \Omega_0$) and interior domain ($\bm x\in \Omega_1$), respectively. Then, the direct scattering problem, presented in the previous section, is modified to
\begin{equation}\label{maxwell_constant}
\begin{aligned}
\Delta u_j + \kappa^2_j \, u_j &= 0, & \Delta v_j + \kappa^2_j \, v_j &= 0, &\bm x \in \Omega_j,
\end{aligned}
\end{equation}
for $j=0,1$ where $\kappa^2_1 =  \mu_1 \epsilon_1 \omega^2 - \beta^2 ,$ with boundary conditions
\begin{subequations}\label{boundary_constant}
\begin{align}
u_1 &= u_0 + e_3^{inc}, & \bm x\in \Gamma , \label{eq_diri1}\\
\tilde\mu_1 \omega \frac{\partial v_1}{\partial n }  + \beta_1 \frac{\partial u_1}{\partial \tau } &= \tilde\mu_0 \omega \frac{\partial v_0}{\partial n }  + \beta_0 \frac{\partial u_0}{\partial \tau }+ \beta_0 \frac{\partial e_3^{inc}}{\partial \tau }, & \bm x\in \Gamma, \label{26b}\\ 
v_1 &= v_0 , & \bm x\in \Gamma , \label{eq_diri2}\\
\tilde\epsilon_1 \omega \frac{\partial u_1 }{\partial n }  - \beta_1 \frac{\partial v_1}{\partial \tau } &= \tilde\epsilon_0 \omega \frac{\partial u_0}{\partial n } +\tilde\epsilon_0 \omega \frac{\partial e_3^{inc}}{\partial n }  - \beta_0 \frac{\partial v_0}{\partial \tau }, & \bm x\in \Gamma \label{26d},
\end{align}
\end{subequations}
where $\tilde\mu_j = \mu_j / \kappa_j^2 , \, \tilde\epsilon_j = \epsilon_j / \kappa_j^2 , \, \beta_j = \beta / \kappa_j^2$   and the radiation conditions
\begin{equation}\label{radiation_constant}
\begin{aligned}
\lim_{r \rightarrow \infty} \sqrt{r} \left( \frac{\partial u_0}{\partial r} - i\kappa_0 u_0 \right) =0 , \quad
 \lim_{r \rightarrow \infty} \sqrt{r} \left( \frac{\partial v_0}{\partial r} - i\kappa_0 v_0 \right) =0 . \\
\end{aligned}
\end{equation}

\begin{theorem}\label{theorem_most}
If $\kappa_1^2$ is not an interior Dirichlet eigenvalue, then the problem \eqref{maxwell_constant} - \eqref{radiation_constant} has at most one solution.
\end{theorem}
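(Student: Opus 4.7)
The plan is to adapt the classical Rellich-type uniqueness argument to the coupled transmission conditions in \eqref{boundary_constant}. Let $(u_0,v_0,u_1,v_1)$ denote the difference of two solutions; it satisfies the homogeneous version of the problem. The Dirichlet conditions reduce to $u_1=u_0$ and $v_1=v_0$ on $\Gamma$, which also imply $\partial u_1/\partial\tau=\partial u_0/\partial\tau$ and $\partial v_1/\partial\tau=\partial v_0/\partial\tau$. Substituting these tangential equalities back into the (now homogeneous) conditions \eqref{26b}, \eqref{26d} rewrites them as
\[
\tilde\mu_1\omega\,\frac{\partial v_1}{\partial n}-\tilde\mu_0\omega\,\frac{\partial v_0}{\partial n}=(\beta_0-\beta_1)\,\frac{\partial u_1}{\partial \tau},\qquad \tilde\epsilon_1\omega\,\frac{\partial u_1}{\partial n}-\tilde\epsilon_0\omega\,\frac{\partial u_0}{\partial n}=(\beta_1-\beta_0)\,\frac{\partial v_1}{\partial \tau}.
\]

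Next I would apply Green's first identity to $(u_j,\bar u_j)$ and $(v_j,\bar v_j)$ in $\Omega_1$ and in $\Omega_0\cap B_R$, where $B_R$ is a large ball containing $\overline{\Omega}_1$. The volume integrands $|\nabla u_j|^2-\kappa_j^2|u_j|^2$ and $|\nabla v_j|^2-\kappa_j^2|v_j|^2$ are real since $\kappa_j^2\in\R$. Multiplying the $u$-identities by $\tilde\epsilon_j\omega$, the $v$-identities by $\tilde\mu_j\omega$, and summing over $j=0,1$ (with the sign coming from the orientation of $\n$ on $\Gamma$), the transmission identities above let the surface integrands on $\Gamma$ telescope, leaving only the cross-coupling term
\[
(\beta_1-\beta_0)\int_\Gamma\Bigl(\bar u_1\,\frac{\partial v_1}{\partial \tau}-\bar v_1\,\frac{\partial u_1}{\partial \tau}\Bigr)ds
\]
together with the radiating boundary integrals over $\partial B_R$.

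The decisive step, and the one I expect to be the main technical obstacle, is showing that this cross-coupling term is real. Because $\Gamma$ is a closed smooth curve, tangential integration by parts gives $\int_\Gamma \bar u_1\,\partial_\tau v_1\,ds=-\overline{\int_\Gamma \bar v_1\,\partial_\tau u_1\,ds}$, so the bracketed integral above equals $-2\Re\!\int_\Gamma \bar v_1\,\partial_\tau u_1\,ds\in\R$; since $\beta_0,\beta_1\in\R$, the whole cross-term is real. Taking imaginary parts of the combined Green identity therefore kills every contribution on $\Gamma$ and every volume term, leaving
\[
\tilde\epsilon_0\omega\,\Im\!\int_{\partial B_R}\bar u_0\,\frac{\partial u_0}{\partial r}\,ds+\tilde\mu_0\omega\,\Im\!\int_{\partial B_R}\bar v_0\,\frac{\partial v_0}{\partial r}\,ds=0.
\]

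Each of these boundary integrals is asymptotically non-negative as $R\to\infty$ by the radiation conditions \eqref{radiation_constant}, and $\tilde\epsilon_0,\tilde\mu_0>0$, so both must vanish. Rellich's lemma applied separately to $u_0$ and $v_0$ then gives $u_0\equiv 0$ and $v_0\equiv 0$ in $\Omega_0$. The Dirichlet transmission conditions propagate this to zero Cauchy data on $\Gamma$ for the interior Helmholtz problems, so $u_1$ and $v_1$ both satisfy $\Delta w+\kappa_1^2 w=0$ in $\Omega_1$ with $w|_\Gamma=0$. The hypothesis that $\kappa_1^2$ is not an interior Dirichlet eigenvalue yields $u_1\equiv v_1\equiv 0$, completing the proof.
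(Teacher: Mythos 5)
Your proposal is correct and follows essentially the same route as the paper: Green's first identity in $\Omega_1$ and in the truncated exterior domain, cancellation of the tangential coupling terms on $\Gamma$ via integration by parts over the closed curve (so that their contribution is real and drops out upon taking imaginary parts), then the radiation conditions, Rellich's lemma, and the interior Dirichlet uniqueness assumption. The only difference is cosmetic bookkeeping — you substitute the Dirichlet transmission conditions first so that the cross-term carries the single coefficient $\beta_1-\beta_0$, whereas the paper keeps the $\beta_0$ and $\beta_1$ tangential terms separate and applies the same integration-by-parts identity to each.
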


\begin{proof}
It is sufficient to show that if $u_0 , v_0 ,u_1 ,v_1$ solve the homogeneous problem \eqref{maxwell_constant} - \eqref{radiation_constant}, that is for $e_3^{inc}=0,$ then, $u_0 = v_0 = 0$ in $\Omega_0$ and $u_1 = v_1 = 0$ in $\Omega_1 .$ Let $S_r$ be a disk with radius $r ,$ boundary $\Gamma_r ,$ centered at the origin and containing $\Omega_1 .$ We set $\Omega_r = S_r \setminus \overline{\Omega}_1 ,$ see Figure \ref{Fig1}. 

\begin{figure}
\begin{center}
\includegraphics[scale=1]{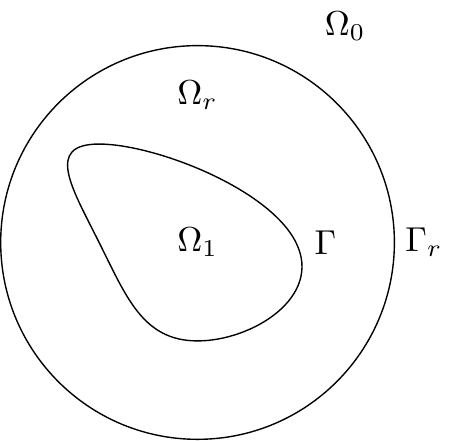}
\caption{The set $\Omega_r$.}\label{Fig1}
\end{center}
\end{figure}

The boundary conditions of the homogeneous problem read
\begin{equation}\label{boundary_homo}
\begin{aligned}
u_1 &= u_0 , & \bm x\in \Gamma , \\
\tilde\mu_1  \frac{\partial v_1}{\partial n } - \tilde\mu_0  \frac{\partial v_0}{\partial n }  &= - \frac{\beta_1}\omega  \frac{\partial u_1}{\partial \tau }   + \frac{\beta_0}\omega \frac{\partial u_0}{\partial \tau } , & \bm x\in \Gamma, \\ 
v_1 &= v_0 , & \bm x\in \Gamma , \\
\tilde\epsilon_1  \frac{\partial u_1 }{\partial n } - \tilde\epsilon_0  \frac{\partial u_0}{\partial n } &=  \frac{\beta_1}\omega \frac{\partial v_1}{\partial \tau }   - \frac{\beta_0}\omega \frac{\partial v_0}{\partial \tau }, & \bm x\in \Gamma .
\end{aligned}
\end{equation} 
We apply Green's first identity in $\Omega_1 $ and considering \eqref{maxwell_constant} we obtain 
\begin{equation}\label{green_omega1}
\begin{aligned}
\tilde\epsilon_1 \int_\Gamma u_1 \frac{\partial \overline{u}_1 }{\partial n } \, ds  &= \tilde\epsilon_1 \int_{\Omega_1}  \left( |\nabla u_1  |^2 + u_1 \Delta \overline{u}_1 \right) d \bm x \\
&= \tilde\epsilon_1 \int_{\Omega_1}  \left( |\nabla u_1  |^2 -\kappa_1^2 | u_1 |^2 \right) d \bm x ,\\
\tilde\mu_1 \int_\Gamma v_1 \frac{\partial \overline{v}_1 }{\partial n } \, ds  &= \tilde\mu_1 \int_{\Omega_1}  \left( |\nabla v_1  |^2 + v_1 \Delta \overline{v}_1 \right) d \bm x \\
&= \tilde\mu_1 \int_{\Omega_1}  \left( |\nabla v_1  |^2 -\kappa_1^2 | v_1 |^2 \right) d \bm x .
\end{aligned}
\end{equation}

Similarly, Green's first identity in $\Omega_r$ together with equations \eqref{boundary_homo} and \eqref{green_omega1} gives
\begin{align*}
\tilde\epsilon_0 \int_{\Gamma_r} u_0 \frac{\partial \overline{u}_0 }{\partial n } \, ds  &= \tilde\epsilon_0 \int_{\Omega_r}  \left( |\nabla u_0  |^2 + u_0 \Delta \overline{u}_0 \right) d \bm x + \tilde\epsilon_0 \int_\Gamma u_0 \frac{\partial \overline{u}_0 }{\partial n } \, ds \\ 
&= \tilde\epsilon_0 \int_{\Omega_r}  \left( |\nabla u_0  |^2 -\kappa_0^2 | u_0 |^2 \right) d \bm x \\
&+  \int_\Gamma u_0 \left( \tilde\epsilon_1  \frac{\partial \overline{u}_1 }{\partial n } -  \frac{\beta_1}\omega \frac{\partial \overline{v}_1}{\partial \tau }   + \frac{\beta_0}\omega \frac{\partial \overline{v}_0}{\partial \tau } \right)  ds \\
&= \tilde\epsilon_0 \int_{\Omega_r}  \left( |\nabla u_0  |^2 -\kappa_0^2 | u_0 |^2 \right) d \bm x \\
&+ \tilde\epsilon_1 \int_{\Omega_1}  \left( |\nabla u_1  |^2 -\kappa_1^2 | u_1 |^2 \right) d \bm x - \frac{\beta_1}\omega \int_\Gamma u_1   \frac{\partial \overline{v}_1}{\partial \tau }  \, ds +  \frac{\beta_0}\omega  \int_\Gamma u_0 \frac{\partial \overline{v}_0}{\partial \tau }   ds
\end{align*}
and
\begin{align*}
\tilde\mu_0 \int_{\Gamma_r} v_0 \frac{\partial \overline{v}_0 }{\partial n } \, ds  &= \tilde\mu_0 \int_{\Omega_r}  \left( |\nabla v_0  |^2 + v_0 \Delta \overline{v}_0 \right) d \bm x + \tilde\mu_0 \int_\Gamma  v_0 \frac{\partial \overline{v}_0 }{\partial n } \, ds \\
&= \tilde\mu_0 \int_{\Omega_r}  \left( |\nabla v_0  |^2 -\kappa_0^2 | v_0 |^2 \right) d \bm x \\
&+  \int_\Gamma v_0 \left( \tilde\mu_1  \frac{\partial \overline{v}_1 }{\partial n } -  \frac{\beta_0}\omega \frac{\partial \overline{u}_0}{\partial \tau }   + \frac{\beta_1}\omega \frac{\partial \overline{u}_1}{\partial \tau } \right)  ds \\
&= \tilde\mu_0 \int_{\Omega_r}  \left( |\nabla v_0  |^2 -\kappa_0^2 | v_0 |^2 \right) d \bm x \\
&+ \tilde\mu_1 \int_{\Omega_0}  \left( |\nabla v_1  |^2 -\kappa_1^2 | v_1 |^2 \right) d \bm x - \frac{\beta_0}\omega \int_\Gamma v_0   \frac{\partial \overline{u}_0}{\partial \tau }  \, ds +  \frac{\beta_1}\omega  \int_\Gamma v_1 \frac{\partial \overline{u}_1}{\partial \tau } ds .
\end{align*}

We add the above two equations and noting that
\[
-  \int_\Gamma u_1   \frac{\partial \overline{v}_1}{\partial \tau }  \, ds = \overline{ \int_\Gamma v_1 \frac{\partial \overline{u}_1}{\partial \tau }     ds }, \qquad  \int_\Gamma u_0   \frac{\partial \overline{v}_0}{\partial \tau }  \, ds = -\overline{ \int_\Gamma v_0 \frac{\partial \overline{u}_0}{\partial \tau }     ds },
\]
we obtain
\begin{equation*}
\Im \left( \tilde\epsilon_0 \int_{\Gamma_r} u_0 \frac{\partial \overline{u}_0 }{\partial n } \, ds + \tilde\mu_0 \int_{\Gamma_r} v_0 \frac{\partial \overline{v}_0 }{\partial n } \, ds \right) = 0 ,
\end{equation*}
or equivalently, using the radiation conditions (see equation 2.12 \cite{WanNak12})
\[
\lim\limits_{r\rightarrow\infty} \int_{\Gamma_r} \left( \epsilon_0 |u_0|^2 + \tilde\epsilon_0 \left| \frac{\partial u_0 }{\partial n } \right|^2 +\mu_0  |v_0|^2 + \tilde\mu_0 \left| \frac{\partial v_0 }{\partial n } \right|^2 \right)  ds = 0.
\]
Thus
\begin{equation*}
\lim\limits_{r\rightarrow\infty} \int_{\Gamma_r}  |u_0|^2 ds = \lim\limits_{r\rightarrow\infty} \int_{\Gamma_r}   |v_0|^2 ds = 0,
\end{equation*}
and by Rellich's lemma it follows that $u_0 = v_0 = 0$ in $\Omega_0.$ Hence, $u_0 = v_0 = 0$ in $\Gamma$ and $u_1 = v_1 =0$ in $\Gamma$ from the boundary conditions. Then, 
$u_1 = v_1 =0$ in $\Omega_1$ follows from the unique solvability of the interior Dirichlet problem, given the assumption of the theorem.
\end{proof}

We define the fundamental solution of the Helmholtz equation in $\R^2:$
\begin{equation}
\Phi_j (\bm x,\bm y) = \frac{i}4 H_0^{(1)} (\kappa_j |\bm x-\bm y|), \quad \bm x,\bm y \in\Omega_j , \quad \bm x \neq \bm y,  
\end{equation}
where $H_0^{(1)}$ is the Hankel function of the first kind and zero order. For a continuous density $f,$ we introduce the single- and double-layer potentials defined by
\begin{equation}\label{single_double}
\begin{aligned}
(S_j f) (\bm x) &= \int_\Gamma \Phi_j (\bm x,\bm y) f(\bm y) ds (\bm y), & \bm x \in\Omega_j , \\
(D_j f) (\bm x) &= \int_\Gamma \frac{\partial \Phi_j}{\partial n (\bm y)} (\bm x,\bm y) f(\bm y) ds (\bm y), & \bm x \in\Omega_j ,
\end{aligned}
\end{equation}
and their derivatives, normal and tangential as $\bm x \rightarrow \Gamma$, using the standard jump relations, see for example \cite{ColKre83, CouHil62}
\begin{equation}\label{jump_relations}
\begin{aligned}
\frac{\partial }{\partial n } (S_j f) (\bm x) &= \int_\Gamma \frac{\partial \Phi_j}{\partial n (\bm x)} (\bm x,\bm y) f(\bm y) ds (\bm y) \mp \frac12 f(\bm x) 
:= (NS_j f) (\bm x) \mp \frac12 f(\bm x)  , & \bm x \in\Gamma, \\
\frac{\partial }{\partial n } (D_j f) (\bm x) &= \int_\Gamma \frac{\partial^2 \Phi_j}{\partial n (\bm x) \partial n (\bm y)} (\bm x,\bm y) f(\bm y) ds (\bm y) 
:= (ND_j f) (\bm x)   , & \bm x \in\Gamma, \\
\frac{\partial }{\partial \tau } (S_j f) (\bm x) &= \int_\Gamma \frac{\partial \Phi_j}{\partial \tau (\bm x)} (\bm x,\bm y) f(\bm y) ds (\bm y) 
:= (TS_j f) (\bm x)  , & \bm x \in\Gamma, \\
\frac{\partial }{\partial \tau } (D_j f) (\bm x) &= \int_\Gamma \frac{\partial^2 \Phi_j}{\partial \tau (\bm x) \partial n (\bm y)} (\bm x,\bm y) f(\bm y) ds (\bm y) \pm \frac12 \frac{\partial f}{\partial \tau } (\bm x) 
:= (TD_j f) (\bm x) \pm \frac12\frac{\partial f}{\partial \tau } (\bm x) , & \bm x \in\Gamma ,
\end{aligned}
\end{equation}
where the upper (lower) sign indicates the limits obtained by approaching the boundary $\Gamma$ from $\Omega_0 \, (\Omega_1) ,$ this means when $j=0 \, (j=1).$ For the last equation, we assume $f$ to be continuously differentiable. We presented the jump relations for continuous densities only for simplicity, since in the following proof we use a Sobolev space setting.

Here, we have to mention the continuity of the single-layer potentials in $\R^2$ and the discontinuity of the double-layer potentials  $(\pm \tfrac12 f)$.  All the integrals are well defined   and particularly the potentials $S_j , D_j$ and $NS_j $ have weakly singular kernels (logarithmic singularity), the potentials $TS_j$ have Cauchy type singularity (of order $1/|\bm x - \bm y|$) and the potentials $ND_j , TD_j$ are hypersingular (of order $1/|\bm x - \bm y|^2$).

\begin{theorem}\label{theo32}
If $\kappa_1^2$ is not an interior Dirichlet eigenvalue and $\kappa_0^2$ is not an interior Dirichlet and Neumann eigenvalue, then the problem \eqref{maxwell_constant} - \eqref{radiation_constant} has a unique solution.
\end{theorem}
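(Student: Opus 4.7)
The plan is to convert the transmission problem \eqref{maxwell_constant}--\eqref{radiation_constant} into an equivalent system of boundary integral equations on $\Gamma$ via the direct (Green's representation) method, show that this system is of Fredholm type with index zero in a suitable Sobolev setting, and conclude by the Fredholm alternative together with Theorem~\ref{theorem_most}. Uniqueness of the transmission problem is already in hand, so the real content is existence.

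First I would write Green's representation for each of the four scalar fields in its own domain: the interior fields $u_1, v_1$ in $\Omega_1$ through $\Phi_1$ with Cauchy data $\varphi_1 := u_1|_\Gamma$, $\psi_1 := \partial u_1/\partial n|_\Gamma$ (and analogously $\tilde\varphi_1, \tilde\psi_1$ for $v_1$), and the exterior fields $u_0, v_0$ in $\Omega_0$ through $\Phi_0$, the radiation conditions \eqref{radiation_constant} justifying the exterior formula. Passing to the boundary and applying the jump relations \eqref{jump_relations} gives four self-consistency identities of the type
\begin{equation*}
\tfrac12 \varphi_1 + D_1 \varphi_1 - S_1 \psi_1 = 0, \qquad \tfrac12 \varphi_0 - D_0 \varphi_0 + S_0 \psi_0 = 0,
\end{equation*}
and the analogous pair for $v_j$. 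Coupling these with the four transmission conditions \eqref{eq_diri1}--\eqref{26d} produces a closed boundary integral system on $\Gamma$ whose right-hand side is assembled from $e_3^{inc}$ together with its tangential and normal derivatives on $\Gamma$.

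The second step is to recast this system in a form where Fredholm theory applies, naturally in a product Sobolev space whose components live in $H^{1/2}(\Gamma)$ and $H^{-1/2}(\Gamma)$. The difficulty is that the transmission conditions \eqref{26b} and \eqref{26d} involve tangential derivatives, which through the Green representations introduce the hypersingular operators $TD_j$ of \eqref{jump_relations}. To regularize these I would invoke Maue's formula \cite{Mau49}, transferring one derivative from the kernel onto the density: after this rearrangement $TD_j$ becomes the composition of a Cauchy-type operator with $\partial/\partial\tau$, plus a compact $\kappa_j^2$-weighted single-layer contribution. Moreover, since $\Phi_0 - \Phi_1$ and its derivatives have smoother kernels, the cross-index differences $TD_0 - TD_1$, $NS_0 - NS_1$, $D_0 - D_1$ are compact, so the rearranged system takes the form \emph{invertible principal part + compact perturbation} and is Fredholm of index zero.

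With the Fredholm structure established, existence is equivalent to triviality of the kernel of the boundary integral operator. A homogeneous solution of the integral system yields, via the Green representations, a solution of the homogeneous transmission problem, which vanishes by Theorem~\ref{theorem_most} under the hypothesis that $\kappa_1^2$ is not an interior Dirichlet eigenvalue. One must then also deduce that the boundary Cauchy data themselves vanish; this is where the assumption on $\kappa_0^2$ enters, since if $\kappa_0^2$ coincided with an interior Dirichlet or Neumann eigenvalue of $-\Delta$ on $\Omega_1$ the exterior boundary operators built from $\Phi_0$ (namely $S_0$, $D_0$ and their normal-derivative counterparts) would have non-trivial null spaces, admitting spurious densities that do not correspond to any scattering field. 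The main obstacle is the Maue reduction of the previous step in the coupled system containing the polarization cross-terms $\beta_j\, \partial/\partial\tau$ from \eqref{26b} and \eqref{26d}: one must verify that the principal symbol of the regularized boundary operator is pointwise invertible on $\Gamma$, so that the compact perturbations genuinely produce a Fredholm operator of index zero in the chosen Sobolev product space.
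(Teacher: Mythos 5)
Your overall strategy (direct method, Fredholm alternative, reduction to the uniqueness theorem) matches the paper's, but you diverge at the Fredholm step and leave the two hardest points unproved. The paper does \emph{not} establish Fredholmness by Maue-regularizing the hypersingular operators and checking a principal symbol. Instead it eliminates the interior Cauchy data: from the interior Green identities it defines $K_1=(NS_1-\tfrac12 I)^{-1}ND_1$ (a boundary-integral Dirichlet-to-Neumann map, well defined precisely because $\kappa_1^2$ is not an interior Dirichlet eigenvalue) and $L_1=2(TD_1-TS_1K_1)$, and similarly $K_0,L_0$ using $(NS_0+\tfrac12 I)^{-1}$ (this is where ``$\kappa_0^2$ not an interior Neumann eigenvalue'' enters). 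Substituting into the transmission conditions yields a $2\times2$ system for $u_0|_\Gamma,v_0|_\Gamma$ alone of the form $(\bm D+\bm K)\bm u=\bm b$ with $\bm D=\mathrm{diag}(D_0-\tfrac12 I)$ invertible (``$\kappa_0^2$ not an interior Dirichlet eigenvalue'') and $\bm K$ compact, because every singular or hypersingular operator appears only pre-composed with the smoothing operator $S_0:H^{-1/2}(\Gamma)\to H^{-1/2}(\Gamma)$. No symbol calculus is needed. By contrast, the step you yourself flag as ``the main obstacle''---pointwise invertibility of the principal symbol of your regularized coupled system with the $\beta_j\,\partial/\partial\tau$ cross-terms---is exactly the content of the Fredholm claim on your route, and you do not carry it out; as written, your argument does not establish that the operator is Fredholm of index zero.

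The second gap is in the injectivity step. You correctly note that a homogeneous solution of the integral system gives, via the representation formulas, fields that vanish by Theorem~\ref{theorem_most}, and that one must then conclude the \emph{densities} vanish; but you do not supply that argument. In the direct method the densities are the jumps of the represented fields across $\Gamma$, so one must also control the ``wrong-side'' potentials: the paper evaluates the exterior representation inside $\Omega_1$ and the interior representation inside $\Omega_0$, shows via the jump relations that these complementary fields satisfy the homogeneous transmission problem with $\kappa_0$ and $\kappa_1$ interchanged, and applies Theorem~\ref{theorem_most} a second time to kill them, whence $\phi_0=\psi_0=0$. Your attribution of the $\kappa_0^2$ hypotheses to ``spurious null spaces of the exterior operators'' at this stage is not the mechanism actually used: those hypotheses are consumed earlier, in the invertibility of $NS_0+\tfrac12 I$ and $D_0-\tfrac12 I$. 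Without the complementary-potential argument (or an equivalent), triviality of the kernel of the boundary operator does not follow from triviality of the scattering fields.
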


\begin{proof}
We apply the direct method, see for instance \cite{KleMar88}, to transform the problem into a system of integral equations. We consider the Green's second theorem in the interior domain
\begin{align}\label{eq_green0}
- u_1 (\bm x) &= \int_\Gamma \frac{\partial \Phi_1}{\partial n (\bm y)} (\bm x,\bm y) u_1 (\bm y) ds (\bm y) - \int_\Gamma \Phi_1 (\bm x,\bm y) \frac{\partial u_1}{\partial n (\bm y)} (\bm y) ds (\bm y), \\
&= (D_1 u_1 ) (\bm x) - ( S_1 \partial_\eta u_1) (\bm x), \quad \bm x\in \Omega_1 , \nonumber
\end{align}
similarly
\[
- v_1 (\bm x) = (D_1 v_1 ) (\bm x) - ( S_1 \partial_\eta v_1) (\bm x), \quad \bm x\in \Omega_1 ,
\]
and in the exterior domain
\begin{equation}\label{eq_greenD0}
\begin{aligned}
 u_0 (\bm x) &= (D_0 u_0 ) (\bm x) - ( S_0 \partial_\eta u_0 ) (\bm x), & \bm x\in \Omega_0 , \\
  v_0 (\bm x) &= (D_0 v_0 ) (\bm x) - ( S_0 \partial_\eta v_0) (\bm x), & \bm x\in \Omega_0 .
\end{aligned}
\end{equation}
Letting $\bm x \rightarrow \Gamma ,$ in the above formulas and taking the normal and the tangential derivatives on $\Gamma,$ we obtain
\begin{subequations}\label{eq_jumpdound}
\begin{align}
\left( NS_j \pm \tfrac12 I\right) \partial_\eta u_j &= ND_j  u_j , & \left( NS_j \pm \tfrac12 I\right) \partial_\eta v_j &= ND_j  v_j ,\label{36e} \\
\left( D_j \mp \tfrac12 I\right) u_j &= S_j \partial_\eta u_j , & \left( D_j \mp\tfrac12 I\right) v_j &= S_j \partial_\eta v_j ,\label{31d}\\ 
 TD_j u_j -  TS_j \partial_\eta u_j &= \pm \tfrac12 \partial_\tau u_j , &  TD_j v_j -  TS_j \partial_\eta v_j &= \pm \tfrac12 \partial_\tau v_j .
\end{align}
\end{subequations}
Combining the relations in \eqref{31d} for $j=0$ with the boundary conditions \eqref{26d} and \eqref{26b}  respectively, we have
\begin{equation}\label{eq_bound}
\begin{aligned}
\left( D_0 -\frac12 I\right) u_0 &= \frac{\tilde\epsilon_1}{\tilde\epsilon_0} S_0 \partial_\eta u_1 - \frac{\beta_1}{\tilde\epsilon_0 \omega} S_0 \partial_\tau v_1 + \frac{\beta_0}{\tilde\epsilon_0 \omega} S_0 \partial_\tau v_0  - S_0 \partial_\eta e^{inc}_3 ,\\
\left( D_0 -\frac12 I\right) v_0 &= \frac{\tilde\mu_1}{\tilde\mu_0} S_0 \partial_\eta v_1 + \frac{\beta_1}{\tilde\mu_0 \omega} S_0 \partial_\tau u_1 - \frac{\beta_0}{\tilde\mu_0 \omega} S_0 \partial_\tau u_0  -  \frac{\beta_0}{\tilde\mu_0 \omega} S_0 \partial_\tau e^{inc}_3 .
\end{aligned}
\end{equation}
We define
\begin{equation}\label{operqtorK0}
K_j := \left( NS_j \pm \tfrac12 I\right)^{-1} ND_j , \quad L_j : = 2 (TD_j -  TS_j K_j ) .
\end{equation}
The operator $K_0$ is well-defined since the integral equation \eqref{36e} for $j=0$ corresponds to the solution of the interior Neumann problem and we assumed $\kappa_0^2$ not to be interior eigenvalue. Similarly, $K_1$ is well-defined if $\kappa_1^2$ is not an interior Dirichlet eigenvalue.

Then, the system of equations \eqref{eq_bound}, using \eqref{eq_jumpdound}, is transformed to
\begin{equation*}
\begin{aligned}
\left( D_0 -\frac12 I\right) u_0 - \frac{\tilde\epsilon_1}{\tilde\epsilon_0} S_0 K_1 u_1 - \frac{\beta_1}{\tilde\epsilon_0 \omega} S_0 L_1 v_1 - \frac{\beta_0}{\tilde\epsilon_0 \omega} S_0 L_0 v_0 
&= - S_0 \partial_\eta e^{inc}_3 ,\\
\left( D_0 -\frac12 I\right) v_0 - \frac{\tilde\mu_1}{\tilde\mu_0} S_0 K_1 v_1 + \frac{\beta_1}{\tilde\mu_0 \omega} S_0 L_1 u_1 + \frac{\beta_0}{\tilde\mu_0 \omega} S_0 L_0 u_0  
&=-  \frac{\beta_0}{\tilde\mu_0 \omega} S_0 \partial_\tau e^{inc}_3 .
\end{aligned}
\end{equation*}
We consider now equations \eqref{eq_diri1} and \eqref{eq_diri2}, to obtain
 \begin{equation*}\label{eq_bound3}
\begin{aligned}
\left( D_0 -\frac12 I\right) u_0 - \frac{\tilde\epsilon_1}{\tilde\epsilon_0} S_0 K_1 u_0 - \frac{\beta_1}{\tilde\epsilon_0 \omega} S_0 L_1 v_0 - \frac{\beta_0}{\tilde\epsilon_0 \omega} S_0 L_0 v_0 &= - S_0 \partial_\eta e^{inc}_3 + \frac{\tilde\epsilon_1}{\tilde\epsilon_0} S_0 K_1 e^{inc}_3 ,\\
\left( D_0 -\frac12 I\right) v_0 - \frac{\tilde\mu_1}{\tilde\mu_0} S_0 K_1 v_0 + \frac{\beta_1}{\tilde\mu_0 \omega} S_0 L_1 u_0 + \frac{\beta_0}{\tilde\mu_0 \omega} S_0 L_0 u_0 &= -  \frac{\beta_0}{\tilde\mu_0 \omega} S_0 \partial_\tau e^{inc}_3 - \frac{\beta_1}{\tilde\mu_0 \omega} S_0 L_1 e^{inc}_3.
\end{aligned}
\end{equation*}
The above system in compact form reads
\begin{equation}\label{eq_general1}
(\bm D + \bm K )\, \bm u = \bm b ,
\end{equation}
where
\begin{align*}
\bm D &= \begin{pmatrix}
D_0 -\tfrac12 I & 0 \\ 0 & D_0 -\tfrac12 I
\end{pmatrix},  \\ 
\bm K &= \begin{pmatrix}
- \tfrac{\tilde\epsilon_1}{\tilde\epsilon_0} S_0 K_1 & - \frac1{\tilde\epsilon_0 \omega} S_0 (\beta_1 L_1 + \beta_0 L_0 )  \\ \frac{1}{\tilde\mu_0 \omega} S_0 (\beta_1 L_1 + \beta_0 L_0 )  & - \frac{\tilde\mu_1}{\tilde\mu_0} S_0 K_1
\end{pmatrix}, \\
\bm u &= \begin{pmatrix}
\left. u_0 \right|_\Gamma \\ \left. v_0 \right|_\Gamma
\end{pmatrix} , \quad
\bm b = \begin{pmatrix}
- S_0 \partial_\eta  + \frac{\tilde\epsilon_1}{\tilde\epsilon_0} S_0 K_1 \\ -  \frac1{\tilde\mu_0 \omega} S_0 ( \beta_0 \partial_\tau  + \beta_1 L_1 )
\end{pmatrix} e^{inc}_3 .
\end{align*}

We assume that $\Gamma$ is of class $C^{2,\alpha}, \, 0<\alpha \leq 1 .$ We know that $D_0 : H^{-1/2} (\Gamma) \rightarrow H^{-1/2} (\Gamma)$ is compact, thus  $(D_0 - \frac12 I)^{-1}: H^{-1/2} (\Gamma) \rightarrow H^{-1/2} (\Gamma)$ is bounded, if $\kappa_0^2$ is not an interior Dirichlet eigenvalue. Then, $\bm D : (H^{-1/2} (\Gamma))^2 \rightarrow (H^{-1/2} (\Gamma))^2$ is bounded and \eqref{eq_general1} is transformed to
\begin{equation}\label{eq_general2}
(\bm I + \bm D^{-1}\bm K )\, \bm u = \bm D^{-1} \bm b .
\end{equation}
First, we show that $\bm K$ is compact. We recall that
\begin{align*}
S_0 K_1 &= S_0 \left( NS_1 - \tfrac12 I\right)^{-1} ND_1 , \\
S_0 (\beta_1 L_1 + \beta_0 L_0 ) &= 2 S_0 \left( \beta_1 TD_1 +\beta_0 TD_0 - \beta_0 TS_0 \left( NS_0 + \tfrac12 I\right)^{-1} ND_0 \right) \\
 &-2 \beta_1 S_0 TS_1 \left( NS_1 - \tfrac12 I\right)^{-1} ND_1 ,
\end{align*}
and the following properties, see \cite{ColKre98, CosSte85}, $S_0 : H^{-1/2} (\Gamma) \rightarrow H^{-1/2} (\Gamma)$ is compact,  $ND_j , \, TD_j: H^{1/2} (\Gamma) \rightarrow H^{-1/2} (\Gamma)$ are bounded, $TS_j: H^{-1/2} (\Gamma) \rightarrow H^{-1/2} (\Gamma)$ are bounded, and $\left( NS_j \pm \tfrac12 I\right)^{-1} : H^{-1/2} (\Gamma) \rightarrow H^{-1/2} (\Gamma)$ are bounded due to compactness of $NS_j : H^{-1/2} (\Gamma) \rightarrow H^{-1/2} (\Gamma).$ Then, the operators
\[
S_0 K_1 , \, S_0 ( \beta_1 L_1 + \beta_0 L_0 ) : H^{1/2} (\Gamma) \rightarrow H^{-1/2} (\Gamma) ,
\]
are compact. Hence, $\bm K : (H^{1/2} (\Gamma))^2 \rightarrow (H^{-1/2} (\Gamma))^2$ is also compact resulting to the compactness of $\bm D^{-1} \bm K : (H^{1/2} (\Gamma))^2 \rightarrow (H^{-1/2} (\Gamma))^2 .$

Next we prove the uniqueness of solutions of equation \eqref{eq_general2}. Solvability follows from the Fredholm alternative theorem. Let $(\phi_0 , \psi_0 )^T$ be the solution of the homogeneous form of \eqref{eq_general2}. Then, the potentials
\begin{equation*}
\begin{aligned}
 u_{0,0} (\bm x) &= (D_0 \phi_0 ) (\bm x) - ( S_0 \partial_\eta \phi_{0} ) (\bm x), & \bm x\in \Omega_0 , \\
  v_{0,0} (\bm x) &= (D_0 \psi_0 ) (\bm x) - ( S_0 \partial_\eta \psi_{0}) (\bm x), & \bm x\in \Omega_0 ,
\end{aligned}
\end{equation*}
and
\begin{equation*}
\begin{aligned}
 u_{1,1} (\bm x) &= -(D_1 \phi_1 ) (\bm x) + ( S_1 \partial_\eta \phi_1 ) (\bm x), & \bm x\in \Omega_1 , \\
  v_{1,1} (\bm x) &= -(D_1 \psi_1 ) (\bm x) + ( S_1 \partial_\eta \psi_1 ) (\bm x), & \bm x\in \Omega_1 ,
\end{aligned}
\end{equation*}
solve the homogeneous form of the problem \eqref{maxwell_constant} - \eqref{radiation_constant}. From Theorem \ref{theorem_most}, we have that $u_{0,0} = v_{0,0} = 0,$ in $\Omega_0$ and $u_{1,1} = v_{1,1} = 0,$ in $\Omega_1$ where the densities $\phi_1 , \psi_1$ depend on the solution of the homogeneous case. 

We construct
\begin{equation*}
\begin{aligned}
 u_{0,1} (\bm x) &= (D_0 \phi_0 ) (\bm x) - ( S_0 \partial_\eta \phi_{0} ) (\bm x), & \bm x\in \Omega_1 , \\
  v_{0,1} (\bm x) &= (D_0 \psi_0 ) (\bm x) - ( S_0 \partial_\eta \psi_{0}) (\bm x), & \bm x\in \Omega_1 ,
\end{aligned}
\end{equation*}
and
\begin{equation*}
\begin{aligned}
 u_{1,0} (\bm x) &= -(D_1 \phi_1 ) (\bm x) + ( S_1 \partial_\eta \phi_1 ) (\bm x), & \bm x\in \Omega_0 , \\
  v_{1,0} (\bm x) &= -(D_1 \psi_1 ) (\bm x) + ( S_1 \partial_\eta \psi_1 ) (\bm x), & \bm x\in \Omega_0 .
\end{aligned}
\end{equation*}
Considering the jump relations, at the boundary $\Gamma$ we obtain
\begin{equation}\label{eq_boundcon}
\begin{aligned}
 u_{0,0} -  u_{0,1} &= \phi_0 , & u_{1,1} -  u_{1,0} &= \phi_1 , \\
  v_{0,0} -  v_{0,1} &= \psi_0 , & v_{1,1} -  v_{1,0} &= \psi_1 .
\end{aligned}
\end{equation}
Since $u_{0,0} = u_{1,1} = v_{0,0} = v_{1,1} = 0, \, \phi_0 = \phi_1 ,$ and $\psi_0 = \psi_1 ,$ on $\Gamma ,$ we find
\[
u_{0,1} = u_{1,0} , \quad v_{0,1} = v_{1,0} , \quad \bm x \in \Gamma .
\]
Similarly, we can rewrite the other two boundary conditions of \eqref{boundary_homo} for those fields taking the differences of the normal and tangential derivatives as $\bm x \rightarrow \Gamma .$ Thus, we see that $u_{0,1}, v_{0,1}, u_{1,0}$ and $v_{1,0}$ solve the homogeneous problem but with $\kappa_1$ and $\kappa_0$ interchanged and from Theorem \ref{theorem_most} we get also $u_{0,1} = v_{0,1} = 0,$ on $\Gamma$ and hence $\phi_0 = \psi_0 = 0$ from \eqref{eq_boundcon}.
\end{proof}

In order to handle the hypersingularity of the operators $TD_j$ we work in a similar way as Mitzner \cite{Mit66} derived the Maue's formula \cite{Mau49} of the hypersingular operator $ND_j ,$ namely
\begin{equation}\label{maue}
(ND_j f) (\bm x)   = \left( TS_j  \frac{\partial f}{\partial \tau } \right)  (\bm x) + \kappa_j^2 \, \n (\bm x) \cdot (S_j \,\n f ) (\bm x) , \quad \bm x \in \Gamma .
\end{equation}
This transformation reduces the hypersingularity to singularity of Cauchy type (first term) and to  a weak singularity  (second term). 

\begin{theorem}
Let $f \in C^{1,\alpha} (\Gamma).$ The hypersingular operator $TD_j$ can be transformed to
\begin{equation}\label{maue2}
(TD_j f) (\bm x)   = -\left( NS_j  \frac{\partial f}{\partial \tau } \right)  (\bm x) + \kappa_j^2 \, \ta (\bm x) \cdot (S_j \,\n f ) (\bm x) ,  \quad \bm x \in \Gamma .
\end{equation}
\end{theorem}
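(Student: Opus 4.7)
The strategy is to mimic Mitzner's derivation of \eqref{maue}: first establish a pointwise kernel identity relating the two mixed kernels $\partial^2\Phi_j/\partial\tau(\bm x)\partial n(\bm y)$ and $\partial^2\Phi_j/\partial n(\bm x)\partial\tau(\bm y)$, then integrate and use integration by parts along the closed curve~$\Gamma$. Since $\Phi_j(\bm x,\bm y)$ depends only on $\bm x-\bm y$, we have $\nabla_{\bm y}\Phi_j=-\nabla_{\bm x}\Phi_j$, so both mixed kernels may be written as bilinear forms in the Hessian $H:=\nabla_{\bm x}^2\Phi_j$: namely $\frac{\partial^2\Phi_j}{\partial\tau(\bm x)\partial n(\bm y)}=-\ta(\bm x)^T H\,\n(\bm y)$ and $\frac{\partial^2\Phi_j}{\partial n(\bm x)\partial\tau(\bm y)}=-\n(\bm x)^T H\,\ta(\bm y)$. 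Writing $\ta=J\n$ with $J=\bigl(\begin{smallmatrix}0&-1\\1&0\end{smallmatrix}\bigr)$, and using the purely algebraic identity $HJ+JH=(\operatorname{tr} H)\,J$ valid for every symmetric $2\times 2$ matrix, one gets
\[
\frac{\partial^2\Phi_j}{\partial\tau(\bm x)\partial n(\bm y)}-\frac{\partial^2\Phi_j}{\partial n(\bm x)\partial\tau(\bm y)}=\n(\bm x)^T(HJ+JH)\n(\bm y)=(\operatorname{tr} H)\,\n(\bm x)\cdot\ta(\bm y).
\]
The Helmholtz equation gives $\operatorname{tr} H=\Delta_{\bm x}\Phi_j=-\kappa_j^2\Phi_j$ for $\bm x\neq\bm y$, and the antisymmetry of $J$ gives $\n(\bm x)\cdot\ta(\bm y)=-\ta(\bm x)\cdot\n(\bm y)$, so the pointwise identity becomes
\begin{equation}\label{key_identity}
\frac{\partial^2\Phi_j}{\partial\tau(\bm x)\partial n(\bm y)}=\frac{\partial^2\Phi_j}{\partial n(\bm x)\partial\tau(\bm y)}+\kappa_j^2\,\ta(\bm x)\cdot\n(\bm y)\,\Phi_j,\qquad \bm x\neq\bm y.
\end{equation}

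Multiplying \eqref{key_identity} by $f(\bm y)$ and integrating over $\Gamma$ (in the Hadamard finite-part sense on the hypersingular side), the weakly singular correction term immediately yields $\kappa_j^2\,\ta(\bm x)\cdot(S_j\,\n f)(\bm x)$. In the remaining term the two directional derivatives act on independent variables and therefore commute with the integration, so that it equals $\int_\Gamma\frac{\partial}{\partial\tau(\bm y)}\!\left[\frac{\partial\Phi_j}{\partial n(\bm x)}(\bm x,\bm y)\right]f(\bm y)\,ds(\bm y)$. Integrating by parts along the closed curve $\Gamma$ produces no endpoint contribution, and the $C^{1,\alpha}$-regularity of $f$ makes $\partial f/\partial\tau$ continuous, so this term becomes $-\int_\Gamma\frac{\partial\Phi_j}{\partial n(\bm x)}(\bm x,\bm y)\,\frac{\partial f}{\partial\tau}(\bm y)\,ds(\bm y)=-\bigl(NS_j\,\tfrac{\partial f}{\partial\tau}\bigr)(\bm x)$. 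Adding the two contributions yields \eqref{maue2}.

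The delicate point, and the main obstacle, is to justify these manipulations in the presence of the hypersingularity of $TD_j$: \eqref{key_identity} is pointwise only for $\bm x\neq\bm y$, whereas $TD_j f$ is defined as a Hadamard finite-part integral, see \eqref{jump_relations}. The standard remedy is to excise a small arc $\Gamma\cap B_\varepsilon(\bm x)$, apply \eqref{key_identity} and the integration by parts on $\Gamma\setminus B_\varepsilon(\bm x)$, and verify that the two endpoint contributions from the boundary of the excised arc cancel as $\varepsilon\to 0$---a cancellation that uses the leading-order behaviour of $\partial\Phi_j/\partial n(\bm x)$ near $\bm y=\bm x$ together with the H\"older continuity of $\partial f/\partial\tau$ inherited from $f\in C^{1,\alpha}(\Gamma)$. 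Once the limit is taken, both integrals on the right-hand side of \eqref{maue2} are ordinary weakly singular integrals, confirming that Maue's trick reduces the hypersingularity of $TD_j$ exactly as \eqref{maue} does for $ND_j$.
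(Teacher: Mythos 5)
Your argument is correct, but it follows a genuinely different route from the paper's. The paper proves \eqref{maue2} in the style of Mitzner: it starts from the Green representation \eqref{eq_green0} of an interior Helmholtz solution $u_1$, applies Green's first theorem to $\bm v \cdot \nabla u_1$ for a constant vector $\bm v$ to obtain a boundary-integral representation of $\nabla_x u_1$ in $\Omega_1$, projects that representation onto $\ta(\bm x)$, passes to the boundary via the jump relations (the tangential analogue of \eqref{eq_green3}), and compares the result with the tangential derivative of \eqref{eq_green0}; the operator identity is then read off from the Cauchy data $(u_1|_\Gamma , \partial_n u_1|_\Gamma)$. You instead work entirely at the kernel level: the algebraic identity $HJ+JH=(\operatorname{tr}H)J$ for the symmetric Hessian $H=\nabla_x^2\Phi_j$, combined with $\operatorname{tr}H=-\kappa_j^2\Phi_j$ off the diagonal and $\ta = J\n$, converts the hypersingular kernel of $TD_j$ into $\partial^2\Phi_j/\partial n(\bm x)\partial\tau(\bm y)$ plus a weakly singular remainder, and an integration by parts in $\bm y$ along the closed curve then produces $-NS_j(\partial f/\partial\tau)$. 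Both computations are sound (I checked your matrix identity and the sign bookkeeping, including $\n(\bm x)\cdot\ta(\bm y)=-\ta(\bm x)\cdot\n(\bm y)$, which reproduce \eqref{maue2} exactly). What each approach buys: yours establishes the formula directly for an arbitrary density $f\in C^{1,\alpha}(\Gamma)$ and makes the origin of the $\kappa_j^2$ term transparent, at the price of having to justify the Hadamard finite-part manipulations — your excision-and-endpoint-cancellation step is only sketched and is the one place where real work remains; the paper's route avoids finite parts altogether (all integrals live in the open domain until the final boundary limit), but strictly speaking it derives the identity only for $f$ arising as traces of interior Helmholtz solutions, and a density or extension remark would be needed to reach general $f$ — a gap your kernel-level argument does not have.
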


\begin{proof}
We recall equation \eqref{eq_green0}. Applying Green's first theorem to $\bm v \cdot \nabla u_1 , \bm v$ arbitrary constant vector, and $\Phi_1 ,$  using that
\[
\Delta \Phi_1 (\bm x,\bm y)+ \kappa^2_1 \Phi_1 (\bm x,\bm y)= - \delta (\bm x - \bm y ),
\]
  yields
  \begin{equation} \label{eq_green1}
  \begin{aligned}
\int_{\Omega_1} \nabla_y (\bm v \cdot \nabla_y u_1 (\bm y) ) \cdot \nabla_y \Phi_1 (\bm x,\bm y) d \bm y - \kappa_1^2 \int_{\Omega_1} \Phi_1 (\bm x,\bm y) \, \bm v \cdot \nabla_y u_1 (\bm y) d \bm y  \\
= \bm v \cdot \nabla_x u_1 (\bm x) +\int_\Gamma \frac{\partial \Phi_1}{\partial n (\bm y)} (\bm x,\bm y) \bm v \cdot \nabla_y u_1 (\bm y) ds (\bm y) .
\end{aligned}
  \end{equation}
The first integral can be transformed to
\begin{align*}
\int_{\Omega_1} \nabla (\bm v \cdot \nabla u_1 ) \cdot \nabla \Phi_1 \, d \bm y &= - \int_{\Omega_1} \left(  \kappa_1^2   u_1  \bm v + \nabla \times (\bm v \times \nabla u_1 )  \right) \cdot \nabla \Phi_1 \, d \bm y  - \kappa_1^2   \bm v \cdot \int_{\Omega_1}  u_1 \nabla \Phi_1 \, d \bm y \\
&- \bm v \cdot \int_\Gamma (\n \times \nabla \Phi_1 ) \times \nabla u_1 \, ds (\bm y ) .
\end{align*} 

Then, \eqref{eq_green1} reads
\begin{equation*}
  \begin{aligned}
 \bm v \cdot \left(   \kappa_1^2    \int_{\Omega_1} \nabla_y (\Phi_1  \, u_1 ) d \bm y 
  + \int_\Gamma (\n \times \nabla_y \Phi_1 ) \times \nabla_y u_1 \, ds (\bm y ) + \int_\Gamma \frac{\partial \Phi_1}{\partial n_y }   \nabla_y u_1 \, ds (\bm y) \right) 
    = -\bm v \cdot    \nabla_x u_1 .
\end{aligned}
\end{equation*}

Using some vector identities and suppressing the inner products with $\bm v$ (holds for any vector), we end up to
\begin{equation*}
- \nabla_x u_1  = \int_\Gamma \left(  - \nabla_y \Phi_1  \times (\n_y \times  \nabla_y u_1 ) +  \kappa_1^2 \Phi_1 u_1 \n_y + \nabla_y \Phi_1 \frac{\partial u_1}{\partial n_y } \right)  ds (\bm y) , 
\end{equation*}
for $\bm x \in \Omega_1 .$ We multiple this equation with $\n (\bm x)$ (inner product) and considering the limit as $\bm x$ approaches the boundary $\Gamma$ from inside and the corresponding jump relations, we obtain \cite{Mit66}
\begin{equation}\label{eq_green3}
\begin{aligned}
 - \frac12\frac{\partial }{\partial n }  u_1  &= \int_\Gamma \left(  (\n_x \times \nabla_x \Phi_1  ) \cdot (\n_y \times  \nabla_y u_1 ) +  \kappa_1^2 \Phi_1 u_1 (\n_x \cdot \n_y ) \right)  ds (\bm y) \\
 &- \int_\Gamma \frac{\partial \Phi_1}{\partial n_x } \frac{\partial u_1}{\partial n_y }   ds (\bm y) , \quad \bm x \in \Gamma .
 \end{aligned}
\end{equation}

Now, equating the above equation and the normal derivative of \eqref{eq_green0} as $\bm x \rightarrow \Gamma$ we obtain \eqref{maue} in $\R^2 .$  

We take the tangential derivative of \eqref{eq_green0} and considering the jump relations we get
\begin{equation*}
 - \frac{\partial}{\partial \tau } u_1 (\bm x) = (TD_1 u_1 ) (\bm x) - \frac12\frac{\partial u_1 }{\partial \tau }  (\bm x)  - \left(  TS_1 \frac{\partial u_1}{\partial n} \right)  (\bm x), \quad \bm x \in \Gamma .
\end{equation*}

We replace $\n (\bm x)$ by $\ta (\bm x)$ in \eqref{eq_green3} (considering the appropriate jump relations) and restricting ourselves in $\R^2 ,$ we have
\begin{equation*}
\begin{aligned}
 - \frac12\frac{\partial }{\partial \tau  }  u_1  &= \int_\Gamma \left( - (\n_x \cdot  \nabla_x \Phi_1  ) (\ta_y \cdot  \nabla_y u_1 ) +  \kappa_1^2 \Phi_1 u_1 (\ta_x \cdot \n_y ) \right)  ds (\bm y) - \int_\Gamma \frac{\partial \Phi_1}{\partial \tau_x } \frac{\partial u_1}{\partial n_y }   ds (\bm y) , \,\, \bm x \in \Gamma .
\end{aligned}
\end{equation*}

Observing the last two equations, we obtain \eqref{maue2}, the equivalent of the Maue's  formula for the tangential derivative of the double-layer potential which also reduces the hypersingularity of the potential. 
\end{proof}

\section{Numerical results} In this section we present numerical examples by implementing the proposed method. We use quadrature rules to integrate the singularities considering trigonometric interpolation. Regarding the convergence and the error analysis of the quadrature formulas, we refer the reader to \cite{Kre99} for the weakly singular operators and to \cite{Kre95} for the hypersingular. We solve the system of integral equations considering these rules by the Nystr\"om method.

We assume the following parametrization for the boundary
\[
\Gamma = \{ \bm z (t) = (z_1 (t), z_2 (t)) : t \in [0,2\pi]\},
\]
where $\bm z : \R \rightarrow \R^2$ is a $C^2$-smooth, $2\pi$-periodic and counter-clockwise oriented parametrization. We assume in addition that $\bm z$  is injective in $[0,2\pi),$ that is $\bm z'  (t) \neq 0,$ for all $t\in [0,2\pi].$

Now, we transform the operators in \eqref{single_double} and their derivatives, see \eqref{jump_relations}, into their parametric forms
\begin{subequations}
\begin{align*}
(S_j \psi ) (t) &= \int_{0}^{2\pi}  M^{S_j} (t,s)   \psi (s)  ds ,  \\
(D_j \psi ) (t) &= \int_{0}^{2\pi}  M^{D_j} (t,s)    \psi (s)  ds ,  \\
(NS_j \psi ) (t) &= \int_{0}^{2\pi} M^{NS_j} (t,s)     \psi (s)   ds ,  \\
(TS_j \psi ) (t) &= \int_{0}^{2\pi}  M^{TS_j} (t,s)   \psi (s) ds ,
\end{align*}
\end{subequations}
and the special forms
\begin{subequations}
\begin{align} 
(ND_j \psi ) (t) &= \frac1{|\bm z' (t)|}\int_{0}^{2\pi} \left[  \frac1{4\pi} \cot \left( \frac{s -t}2\right) \frac{\partial \psi}{\partial s} (s)  -  M^{ND_j} (t,s)   \psi (s) \right]  ds  \label{eq_ND} \\ 
&+ \kappa_j^2 \int_{0}^{2\pi} (\n (t) \cdot \n (s))  M^{S_j} (t,s)   \psi (s)  ds , \nonumber \\ 
(TD_j \psi ) (t) &= \frac1{|\bm z' (t)|}\int_{0}^{2\pi}   M^{TD_j} (t,s)   \psi (s) ds \label{eq_TD} \\ 
&+  \kappa_j^2 \int_{0}^{2\pi} (\ta (t) \cdot \n (s))  M^{S_j} (t,s)   \psi (s)  ds \nonumber,
\end{align}
\end{subequations}
for $t \in [0,2\pi], \, j=0,1 ,$ and $\psi (t) =  f (\bm z (t)), \, \n (t) = \n (\bm z(t)), \, \ta (t) = \ta (\bm z(t)),$ where 
\begin{equation}\label{eq_kernels}
\begin{aligned}
M^{S_j} (t,s)  &=   \frac{i}4 H_0^{(1)} (\kappa_j |\bm r (t,s)|) |\bm z' (s)| ,  \\
M^{D_j} (t,s) &= \frac{i\kappa_j}4 \frac{\n (s) \cdot \bm r (t,s)}{|\bm r (t,s)|} H_1^{(1)} (\kappa_j |\bm r (t,s)|) |\bm z' (s)|,  \\
M^{NS_j} (t,s) &= - \frac{i\kappa_j}4 \frac{\n (t) \cdot \bm r (t,s)}{|\bm r (t,s)|} H_1^{(1)} (\kappa_j |\bm r (t,s)|) |\bm z' (s)|,  \\
M^{TS_j} (t,s)  &=  - \frac{i\kappa_j}4 \frac{\ta (t) \cdot \bm r (t,s)}{|\bm r (t,s)|} H_1^{(1)} (\kappa_j |\bm r (t,s)|) |\bm z' (s)|, \\
M^{ND_j} (t,s) &= \frac{i}4 M (t,s) \left[ \kappa_j^2 H_0^{(1)} (\kappa_j |\bm r (t,s)|) - 2 \kappa_j \frac{H_1^{(1)} (\kappa_j |\bm r (t,s)|)}{|\bm r (t,s)|} \right] \\
&+ \frac{i\kappa_j}4 \frac{\bm z' (t) \cdot \bm z' (s)}{|\bm r (t,s)|} H_1^{(1)} (\kappa_j |\bm r (t,s)|) + \frac1{8\pi} \sin^{-2} \left( \frac{t - s }2\right),\\
M^{TD_j} (t,s) &=  \frac{i}4 M^J (t,s) \left[ \kappa_j^2 H_0^{(1)} (\kappa_j |\bm r (t,s)|) - 2 \kappa_j \frac{H_1^{(1)} (\kappa_j |\bm r (t,s)|)}{|\bm r (t,s)|} \right] \\
&+ \frac{i\kappa_j}4 \frac{\bm J \,\bm z' (t) \cdot \bm z' (s)}{|\bm r (t,s)|} H_1^{(1)} (\kappa_j |\bm r (t,s)|),
\end{aligned}
\end{equation}
with $\bm r (t,s) = \bm z (t)-\bm z(s),$ and
\begin{align*}
M (t,s) &= \frac{(\bm z' (t) \cdot  \bm r (t,s))(\bm z' (s) \cdot  \bm r (t,s))}{|\bm r (t,s)|^2}, \\
M^J (t,s) &= \frac{( \bm J \,\bm z' (t) \cdot  \bm r (t,s))(\bm z' (s) \cdot  \bm r (t,s))}{|\bm r (t,s)|^2} . \\
\end{align*}
Here, we have used the formulas $H_1^{(1)}(t) =  - H_0^{(1)'}(t)$ and $H_1^{(1)'}(t) =   H_0^{(1)}(t) - \tfrac1t H_1^{(1)}(t).$ The form in equation \eqref{eq_ND} is based on \eqref{maue}, derived by Kress \cite{Kre95} and improved in \cite{Kre14}. The derivation of \eqref{eq_TD} is easier. Namely, we define $\partial_{t_J} := \bm J \, \bm z' \partial_z$ and
\[
M^{TD_j} (t,s) := \frac{\partial^2}{\partial t_J \partial s} M^{S_j} (t,s),
\]
then
\begin{align*}
-\left( NS_j  \frac{\partial f }{\partial \tau } \right)  (\bm z(t)) &= -\int_{0}^{2\pi} 
\frac{\bm J \, \bm z' (t)}{|\bm z' (t)|} \frac{\partial}{\partial z (t)} M^{S_j} (t,s) \frac{\bm z' (s)}{|\bm z' (s)|} \frac{\partial \psi}{\partial z(s)} (s)  |\bm z' (s)| ds \\
&= - \frac1{|\bm z' (t)|} \int_{0}^{2\pi}  \frac{\partial}{\partial t_J} M^{S_j} (t,s) \frac{\partial }{\partial s} \psi (s) ds \\
&=  \frac1{|\bm z' (t)|} \int_{0}^{2\pi}  \frac{\partial^2}{\partial t_J \partial s} M^{S_j} (t,s) \psi (s) ds 
\end{align*}
and \eqref{eq_TD} follows by simply adding the parametrized form  of the second term in the right-hand side of \eqref{maue2}. The kernels in \eqref{eq_kernels} admit the decomposition
\[
M^k (t,s) = M_1^k (t,s) \ln \left( 4 \sin^2 \left(\frac{t - s}2\right) \right) + M_2^k (t,s) ,
\]
for $k = S_j ,D_j , NS_j , ND_j , TD_j$ where $M_1^k$ and $M_2^k$ are analytic, due to logarithmic singularity of the functions at $t = s$. The case of $M^{TS_j}$  has to be treated differently because of the Cauchy type singularity of the kernel as $t = s.$ Thus, we split the kernel as
\begin{align*}
M^{TS_j} (t,s) &= M_1^{TS_j} (t,s) \ln \left( 4 \sin^2 \left(\frac{t - s}2\right) \right) + \frac1{4\pi} \cot \left( \frac{s-t}2\right) +  M_2^{TS_j} (t,s).
\end{align*}

The kernels $M_1^k$ are defined for $ t \neq s$ by, see \cite{Kre95}
\begin{align*}
M_1^{S_j} (t,s) &= 
      -\dfrac1{4\pi} J_0 (\kappa_j |\bm r (t,s)|) |\bm z' (s)|,\\
   M_1^{D_j} (t,s) &= 
   - \dfrac{ \kappa_j}{4\pi} \dfrac{\n (s) \cdot \bm r (t,s)}{|\bm r (t,s)|} J_1 (\kappa_j |\bm r (t,s)|) |\bm z' (s)| ,  \\
    M_1^{NS_j} (t,s) &= 
   \dfrac{ \kappa_j}{4\pi} \dfrac{\n (t) \cdot \bm r (t,s)}{|\bm r (t,s)|} J_1 (\kappa_j |\bm r (t,s)|) |\bm z' (s)| , \\
     M_1^{TS_j} (t,s)  &=   \frac{ \kappa_j}{4\pi} \frac{\ta (t) \cdot \bm r (t,s)}{|\bm r (t,s)|} J_1 (\kappa_j |\bm r (t,s)|) |\bm z' (s)|, \\
      M_1^{ND_j} (t,s) &= 
  -\frac1{4\pi} M (t,s) \left[ \kappa_j^2 J_0 (\kappa_j |\bm r (t,s)|) - 2 \kappa_j \frac{J_1 (\kappa_j |\bm r (t,s)|)}{|\bm r (t,s)|} \right] \\ 
  &- \frac{ \kappa_j}{4\pi} \frac{\bm z' (t) \cdot \bm z' (s)}{|\bm r (t,s)|} J_1 (\kappa_j |\bm r (t,s)|) , \\
M_1^{TD_j} (t,s) &=  -\frac1{4\pi} M^J (t,s) \left[ \kappa_j^2 J_0 (\kappa_j |\bm r (t,s)|) - 2 \kappa_j \frac{J_1 (\kappa_j |\bm r (t,s)|)}{|\bm r (t,s)|} \right] \\
&- \frac{ \kappa_j}{4\pi} \frac{\bm J \,\bm z' (t) \cdot \bm z' (s)}{|\bm r (t,s)|} J_1 (\kappa_j |\bm r (t,s)|) ,
\end{align*}
with diagonal terms
\begin{equation*}
\begin{aligned}
M_1^{S_j} (t,t) &=  -\dfrac1{4\pi} |\bm z' (t)|  , &
   M_1^{D_j} (t,t) &= 0, \\
    M_1^{NS_j} (t,t) &= 0, 
       &   M_1^{TS_j} (t,t)  &=  0 
  , \\
M_1^{ND_j} (t,t) &= -\frac{\kappa_j^2}{8\pi} |\bm z' (t)|^2 ,  &
M_1^{TD_j} (t,t) &= 0,
\end{aligned}
\end{equation*}
where $J_0 , \, J_1$  are the Bessel functions of order zero and one, respectively. 
The kernels $M^k_2$, for $t \neq s ,$ are given by
\[
M_2^k (t,s)  = M^k (t,s) - M_1^k (t,s) \ln \left( 4 \sin^2 \left(\frac{t - s}2\right) \right)
\]
and
\begin{align*}
M_2^{TS_j} (t,s) &= M^{TS_j} (t,s) - M_1^{TS_j} (t,s) \ln \left( 4 \sin^2 \left(\frac{t - s}2\right) \right) - \frac1{4\pi} \cot \left( \frac{s-t}2\right),
\end{align*}
with diagonal terms
\begin{equation*}
\begin{aligned}
M_2^{S_j} (t,t) &= \left[  \frac{i}4 -\frac{C}{2\pi} - \frac1{2\pi} \ln \left( \frac{\kappa_j}2 |\bm z' (t)|\right) \right]  |\bm z' (t)| , \\
   M_2^{D_j} (t,t) &= \frac1{4\pi} \frac{\n (t) \cdot \bm z''(t)}{|\bm z' (t)|}, \\
    M_2^{NS_j} (t,t) &= \frac1{4\pi} \frac{\n (t) \cdot \bm z''(t)}{|\bm z' (t)|}, \\
     M_2^{TS_j} (t,t)  &=  -\frac1{4\pi} \frac{\ta  (t) \cdot \bm z''(t)}{|\bm z' (t)|}, \\
         M_2^{ND_j} (t,t) &= \left[  \pi i-1 - 2C - 2\ln \left( \frac{\kappa_j}2 |\bm z' (t)|\right)\right] \frac{\kappa_j^2}{8\pi} |\bm z' (t)|^2
   +  \frac1{24\pi} \\
   &+ \frac{(\bm z' (t) \cdot \bm z'' (t) )^2}{4\pi |\bm z' (t)|^4} - \frac{\bm z' (t) \cdot \bm z''' (t) }{12\pi |\bm z' (t)|^2} - \frac{ |\bm z'' (t) |^2}{8\pi |\bm z' (t)|^2} ,\\
M_2^{TD_j} (t,t) &=   - \frac{(\bm z' (t) \cdot \bm z'' (t) ) (\bm J \,\bm z' (t) \cdot \bm z'' (t))}{4\pi |\bm z' (t)|^4} + \frac{\bm J \, \bm z' (t) \cdot \bm z''' (t) }{12\pi |\bm z' (t)|^2} ,
\end{aligned}
\end{equation*}
where $C$ is the Euler's constant. For the last approximation, we used same arguments as in the case of $M_2^{ND_j}.$ 

Considering the equidistant points $t_j = j \pi /n, \, j=0,...,2n-1 ,$ we use the trapezoidal rule to approximate the operators with smooth kernel
\[
\int_0^{2\pi}  \psi (s) ds \approx \frac{\pi}n \sum_{j=0}^{2n-1} \psi (t_j),
\]
and the following quadrature rules for the singular kernels
\begin{equation*}
\begin{aligned}
\int_0^{2\pi} \ln \left( 4 \sin^2 \left(\frac{t - s}2\right) \right) \psi (s) ds &\approx  \sum_{j=0}^{2n-1} R_j^{(n)} (t) \psi (t_j), \\
\frac1{4\pi} \int_0^{2\pi} \cot \left( \frac{s-t}2\right) \frac{\partial}{\partial s}\psi (s) ds &\approx  \sum_{j=0}^{2n-1} T_j^{(n)} (t) \psi (t_j), \\
 \int_0^{2\pi} \cot \left( \frac{s-t}2\right) \psi (s) ds &\approx  \sum_{j=0}^{2n-1} S_j^{(n)} (t) \psi (t_j), 
\end{aligned}
\end{equation*}
with weights
\begin{equation*}
\begin{aligned}
R_j^{(n)} (t) &= -\frac{2\pi}n \sum_{m=1}^{n-1} \frac1{m} \cos \left( m (t - t_j)\right) - \frac{\pi}{n^2} \cos (n(t - t_j)), \\
T_j^{(n)} (t) &= -\frac1{2n} \sum_{m=1}^{n-1} m \cos \left( m (t - t_j)\right) - \frac14 \cos (n(t - t_j)), \\
S_j^{(n)} (t) &= \frac{\pi}n \left[ 1 - (-1)^j \cos (nt) \right] \cot \left( \frac{t_j -t}2\right), \quad t \neq t_j. 
\end{aligned}
\end{equation*}

Then, the system \eqref{eq_general1}, or similarly \eqref{eq_general2}, considering the above parametric forms of the integral operators and the quadrature rules, is transformed to a linear system by applying the Nystr\"om method. 

To illustrate the efficiency of our method, we consider two different cases. In the first 
example, motivated by \cite{WanNak12}, we construct a model where the scattered fields can be analytically computed and in the second one we consider the scattering of obliquely incident waves.

In both examples, the parametrization of the obstacle is given by
\[
\bm z (t) = (2 \cos t + 1.5 \cos 2t -1 ,\, 2.5 \sin t), \quad t\in [0,2\pi].
\]

\subsection{Example with analytic solution}
We consider four arbitrary points $\bm z_1 , \bm z_2 \in \Omega_1$ and $\bm z_3 , \bm z_4 \in \Omega_0 $ and we define the boundary functions $f_k , \, k=1,2,3,4$  by
\begin{align*}
f_1 &= H_0^{(1)} (\kappa_1 |\bm r_3 (\bm x) |) - H_0^{(1)} (\kappa_0 |\bm r_1 (\bm x) |) , \\
f_2 &= -\tilde\mu_1 \omega \kappa_1 H_1^{(1)} (\kappa_1 |\bm r_4 (\bm x) |) \frac{\n (\bm x) \cdot \bm r_4 (\bm x)}{|\bm r_4 (\bm x) |} 
 - \beta_1 \kappa_1 H_1^{(1)} (\kappa_1 |\bm r_3 (\bm x) |) \frac{\ta (\bm x) \cdot \bm r_3 (\bm x)}{|\bm r_3 (\bm x) |}\\
&+\tilde\mu_0\omega \kappa_0 H_1^{(1)} (\kappa_0 |\bm r_2 (\bm x) |) \frac{\n (\bm x) \cdot \bm r_2 (\bm x)}{|\bm r_2 (\bm x) |} +\beta_0 \kappa_0 H_1^{(1)} (\kappa_0 |\bm r_1 (\bm x) |) \frac{\ta (\bm x) \cdot \bm r_1 (\bm x)}{|\bm r_1 (\bm x) |} ,\\
f_3 &= H_0^{(1)} (\kappa_1 |\bm r_4 (\bm x) |) -  H_0^{(1)} (\kappa_0 |\bm r_2 (\bm x) |) ,\\
f_4 &= -\tilde\epsilon_1 \omega \kappa_1 H_1^{(1)} (\kappa_1 |\bm r_3 (\bm x) |) \frac{\n (\bm x) \cdot \bm r_3 (\bm x)}{|\bm r_3 (\bm x) |} + \beta_1 \kappa_1 H_1^{(1)} (\kappa_1 |\bm r_4 (\bm x) |) \frac{\ta (\bm x) \cdot \bm r_4 (\bm x)}{|\bm r_4 (\bm x) |}\\
&+\tilde\epsilon_0\omega \kappa_0 H_1^{(1)} (\kappa_0 |\bm r_1 (\bm x) |) \frac{\n (\bm x) \cdot \bm r_1 (\bm x)}{|\bm r_1 (\bm x) |} -\beta_0 \kappa_0 H_1^{(1)} (\kappa_0 |\bm r_2 (\bm x) |) \frac{\ta (\bm x) \cdot \bm r_2 (\bm x)}{|\bm r_2 (\bm x) |} ,
\end{align*}
where $\bm r_k (\bm x) = \bm x - \bm z_k .$ Then, the fields 
\begin{equation}\label{eq_exact}
\begin{aligned}
u_0 (\bm x) &= H_0^{(1)} (\kappa_0 |\bm x- \bm z_1 |), &  v_0 (\bm x) &= H_0^{(1)} (\kappa_0 |\bm x- \bm z_2 |), \quad \bm  x \in \Omega_0, \\
u_1 (\bm x) &= H_0^{(1)} (\kappa_1 |\bm x- \bm z_3 |), &  v_1 (\bm x) &= H_0^{(1)} (\kappa_1 |\bm x- \bm z_4 |), \quad \bm  x \in \Omega_1 ,
\end{aligned}
\end{equation}
solve the following problem
\begin{equation*}
\begin{aligned}
\Delta u_0 + \kappa^2_0 \, u_0 &= 0, &
\Delta v_0 + \kappa^2_0 \, v_0 &= 0, & \bm  x \in \Omega_0, \\
\Delta u_1 + \kappa^2_1 \, u_1 &= 0, &
\Delta v_1 + \kappa^2_1 \, v_1 &= 0, & \bm  x \in \Omega_1, 
\end{aligned}
\end{equation*}
with boundary conditions
\begin{equation*}
\begin{aligned}
u_1 &= u_0 + f_1, & \bm  x\in \Gamma , \\
\tilde\mu_1 \omega \frac{\partial v_1}{\partial n }  + \beta_1 \frac{\partial u_1}{\partial \tau } &= \tilde\mu_0 \omega \frac{\partial v_0}{\partial n }  + \beta_0 \frac{\partial u_0}{\partial \tau }+ f_2, & \bm  x\in \Gamma, \\ 
v_1 &= v_0 +f_3 , & \bm x\in \Gamma , \\
\tilde\epsilon_1 \omega \frac{\partial u_1 }{\partial n }  - \beta_1 \frac{\partial v_1}{\partial \tau } &= \tilde\epsilon_0 \omega \frac{\partial u_0}{\partial n }   - \beta_0 \frac{\partial v_0}{\partial \tau } +f_4, & \bm x\in \Gamma ,
\end{aligned}
\end{equation*}
and the radiation conditions
\begin{equation*}
\begin{aligned}
\lim_{r \rightarrow \infty} \sqrt{r} \left( \frac{\partial u_0}{\partial r} - i\kappa_0 u_0 \right) =0 , \quad
 \lim_{r \rightarrow \infty} \sqrt{r} \left( \frac{\partial v_0}{\partial r} - i\kappa_0 v_0 \right) =0 . \\
\end{aligned}
\end{equation*}

\begin{figure}
\begin{center}
\includegraphics[scale=0.3]{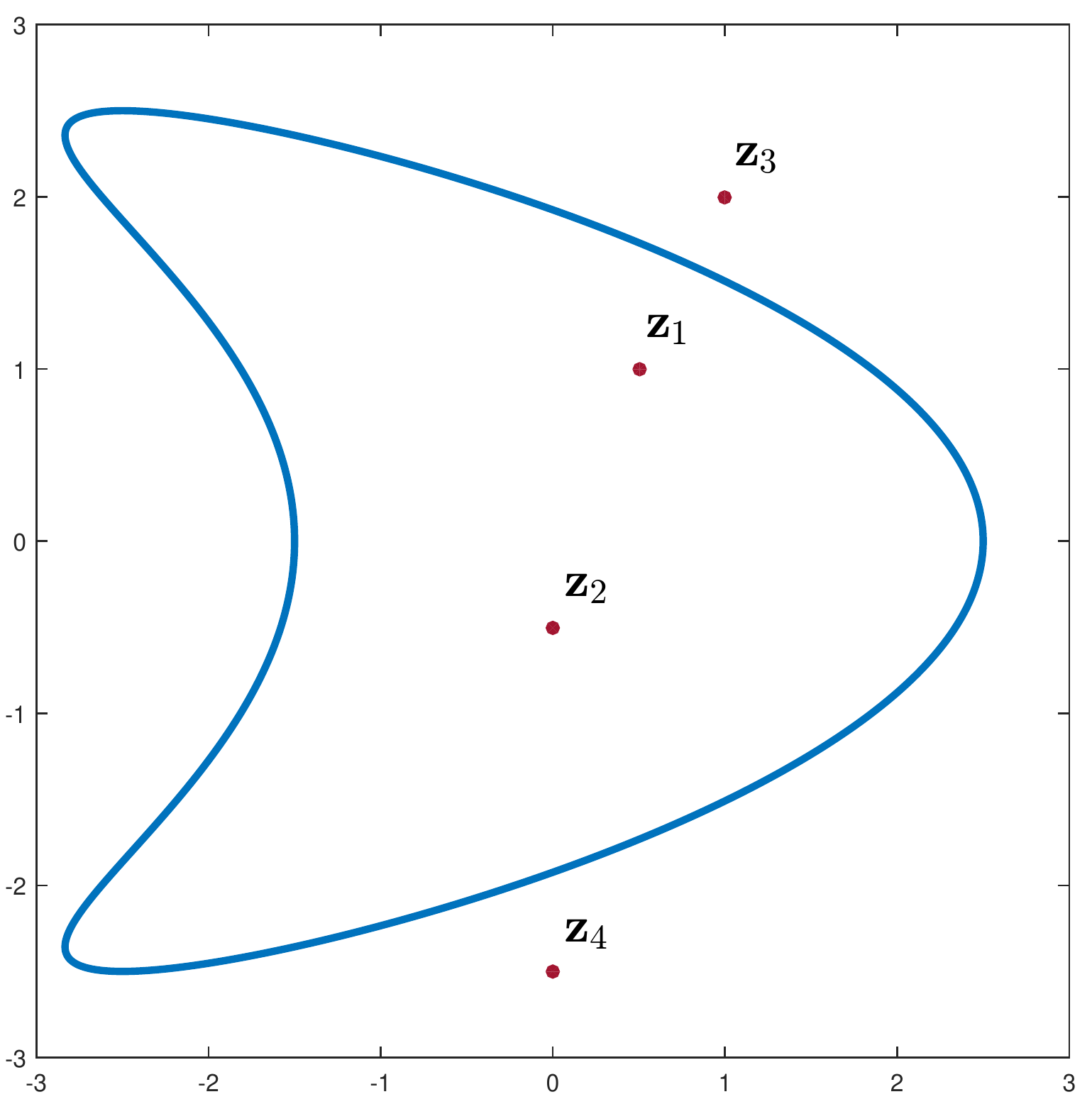}
\caption{The parametrization of the boundary $\Gamma$ and the source points.}\label{Fig2}
\end{center}
\end{figure}

For this problem, we can derive again a system as \eqref{eq_general1}, where now $\bm b$ is replaced by
\begin{equation*}
\textbf{b}_f = \begin{pmatrix}
- \tfrac1{\tilde\epsilon_0 \omega} S_0 f_4  + \tfrac{\tilde\epsilon_1}{\tilde\epsilon_0} S_0 K_1 f_1 + \tfrac{\beta_1}{\tilde\epsilon_0 \omega} S_0 L_1  f_3 \\ 
- \tfrac1{\tilde\mu_0 \omega} S_0 f_2  + \tfrac{\tilde\mu_1}{\tilde\mu_0} S_0 K_1 f_3 -\tfrac{\beta_1}{\tilde\mu_0 \omega} S_0 L_1  f_1
\end{pmatrix} .
\end{equation*}

Given \eqref{eq_exact} and the asymptotic behaviour of the Hankel function \cite{ColKre98}, we know that the far field patterns of $u_0$ and $v_0$ are given by
\begin{equation}\label{far_exact}
u^\infty_0 (\bm{\hat{x}}) = \frac{-4 ie^{i\pi /4}}{\sqrt{8\pi \kappa_0}} e^{-i\kappa_0 \bm{\hat{x}} \cdot \bm z_1}, \quad
v^\infty_0 (\bm{\hat{x}}) = \frac{-4 ie^{i\pi /4}}{\sqrt{8\pi \kappa_0}} e^{-i\kappa_0 \bm{\hat{x}} \cdot \bm z_2}, \quad \bm{\hat{x}} \in \SQ,
\end{equation}
where $\SQ$ is the unit ball. Numerically, the far field patterns are given by
\begin{equation}\label{far_comp}
\begin{aligned}
u^\infty_0 (\bm{\hat{x}}) &= \frac{e^{i\pi /4}}{\sqrt{8\pi \kappa_0}} \int_0^{2\pi} e^{-i\kappa_0 \bm{\hat{x}} \cdot \bm z (s)} \left[ -i\kappa_0  (\bm{\hat{x}} \cdot \n (s)) \varphi_0 (s)  - (K_0 \varphi_0) (s) \right] |\bm z' (s)| ds, \\
v^\infty_0 (\bm{\hat{x}}) &= \frac{e^{i\pi /4}}{\sqrt{8\pi \kappa_0}} \int_0^{2\pi} e^{-i\kappa_0 \bm{\hat{x}} \cdot \bm z (s)} \left[ -i\kappa_0  (\bm{\hat{x}} \cdot \n (s)) \psi_0 (s) - (K_0 \psi_0) (s) \right] |\bm z' (s)| ds ,
\end{aligned}
\end{equation}
where $\bm{\varphi } := (\varphi_0 , \, \psi_0)^T$ solves
\[
(\bm D + \bm K )\, \bm{\varphi } = \bm b_f .
\]
Here, we have used the representations \eqref{eq_greenD0} for the exterior fields and the asymptotics of the Hankel function. The operator $K_0$ is given by \eqref{operqtorK0}.

We consider the points $\bm z_1 = (0.5 ,\, 1)$ and $\bm z_2 = (0 ,\, -0.5)$ in $\Omega_1$ and the  points $\bm z_3 = (1 ,\, 2)$ and $\bm z_4 = (0 ,\, -2.5)$ in $\Omega_0$, see Figure \ref{Fig2}. We set $\omega =1$ and $n=32.$ The exact values \eqref{far_exact} and the reconstructed \eqref{far_comp} for $(\epsilon_1 , \mu_1) = (3, \,2)$ and $(\epsilon_0 , \mu_0) = (1, \,1)$ are presented in Figure \ref{Fig3}. The results are presented for $\theta = \pi/3 .$ In Table \ref{table1}, we provide the absolute errors of the far field patterns for different values of $n$ and $t.$ 

As a general comment we could say that the reconstructions are accurate and illustrate the feasibility of the proposed method. However, the convergence is slower compared to the impedance cylinder case \cite{WanNak12}. The main reason is the complexity of the matrix $\bm K$ involving the product of four operators: $S_0 \, TS_j \left( NS_j + \tfrac12 I\right)^{-1} ND_j , j=0,1$ resulting to an increase of the condition number. As $\theta \rightarrow \pi/2 ,$ the results improve considerably.

\begin{figure}[t]
\begin{center}
\includegraphics[scale=0.5]{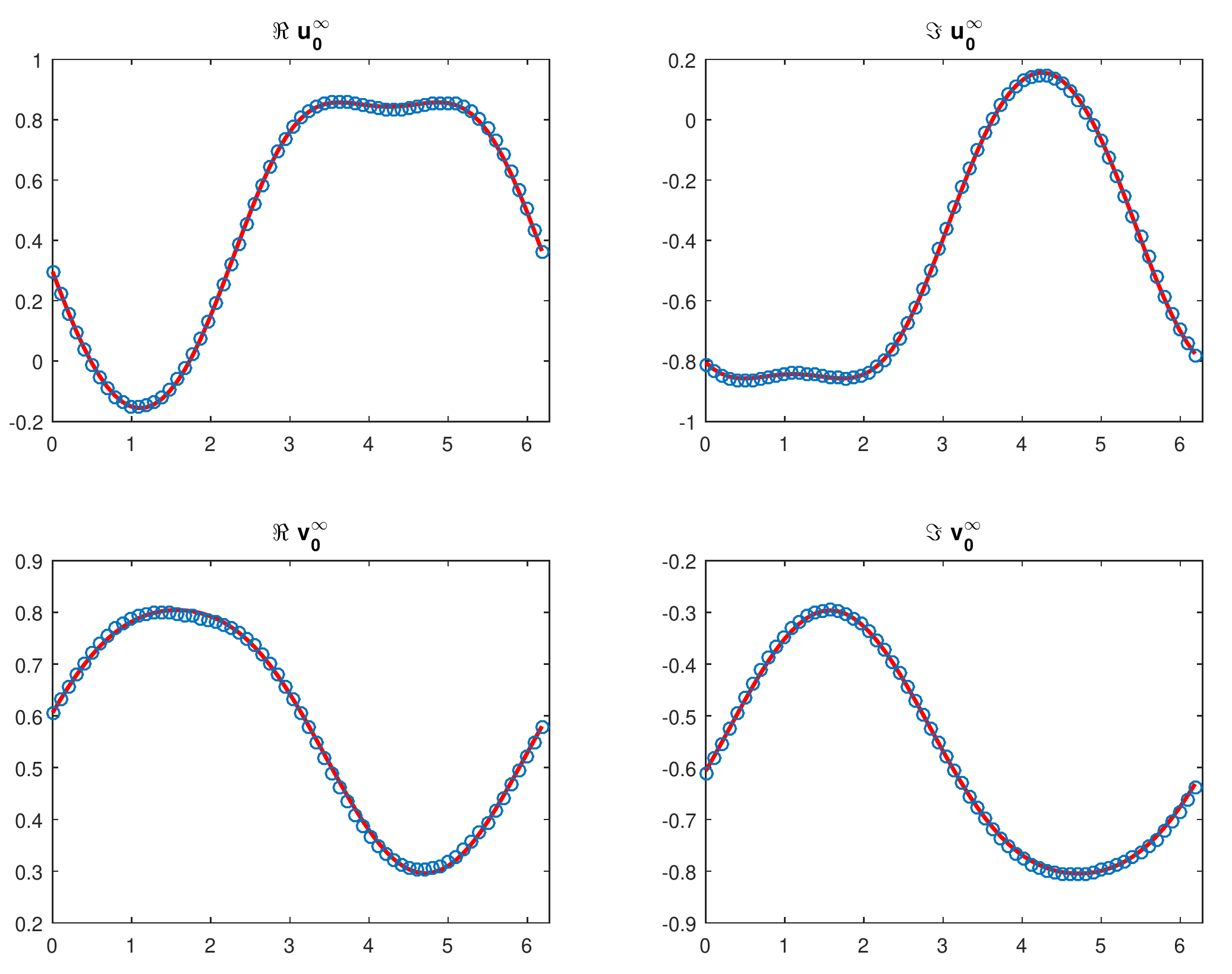}
\caption{The far field patterns: Reconstructed (blue open circles) and exact (red solid line). }\label{Fig3}
\end{center}
\end{figure}

\begin{table}[t]
\begin{center}
\includegraphics[scale=0.82]{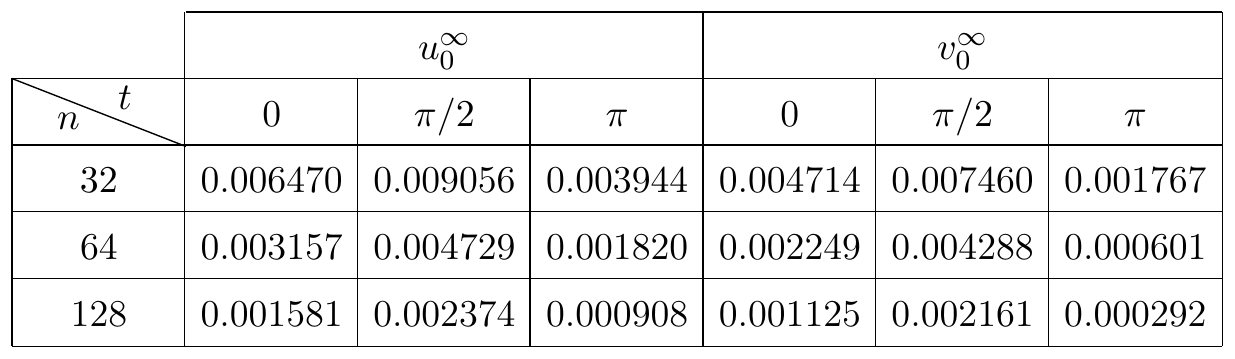}\vspace{0.5cm}
\caption{Absolute errors of the far field patterns of $u_0$ and $v_0$ for different orders $n$ at discrete points $t$.}
\label{table1}
\end{center}
\end{table}

\subsection{Example with oblique incidence}

In this example, we consider the usual obliquely incident (TM) polarized electromagnetic plane wave, resulting to the forms \eqref{incident_el}. We keep the same values for all the parameters as in the previous example. We restrict the computations of the fields to the rectangular domain $[-5,\,5]^2$ and we consider a two-dimension uniform-space discretization, namely, $\bm x_{kj} = (-5 + k \delta , -5 + j\delta),$ where $\delta = 10/(2m-1),$ for $k,j = 0,...,2m-1 .$ We use $m=128 .$

The values of the norms of the scattered electric and magnetic fields $|u_0|, \, |v_0|$ and the interior electric and magnetic fields $|u_1|, \, |v_1|$ are presented in Figures \ref{Fig4} and \ref{Fig5} for different values of the polar angle $\phi ,$ which in $\R^2$ corresponds to the incident direction $(\cos \phi , \,\sin \phi) \in \SQ.$ 

\begin{figure}[p]
\begin{center}
\includegraphics[scale=0.55]{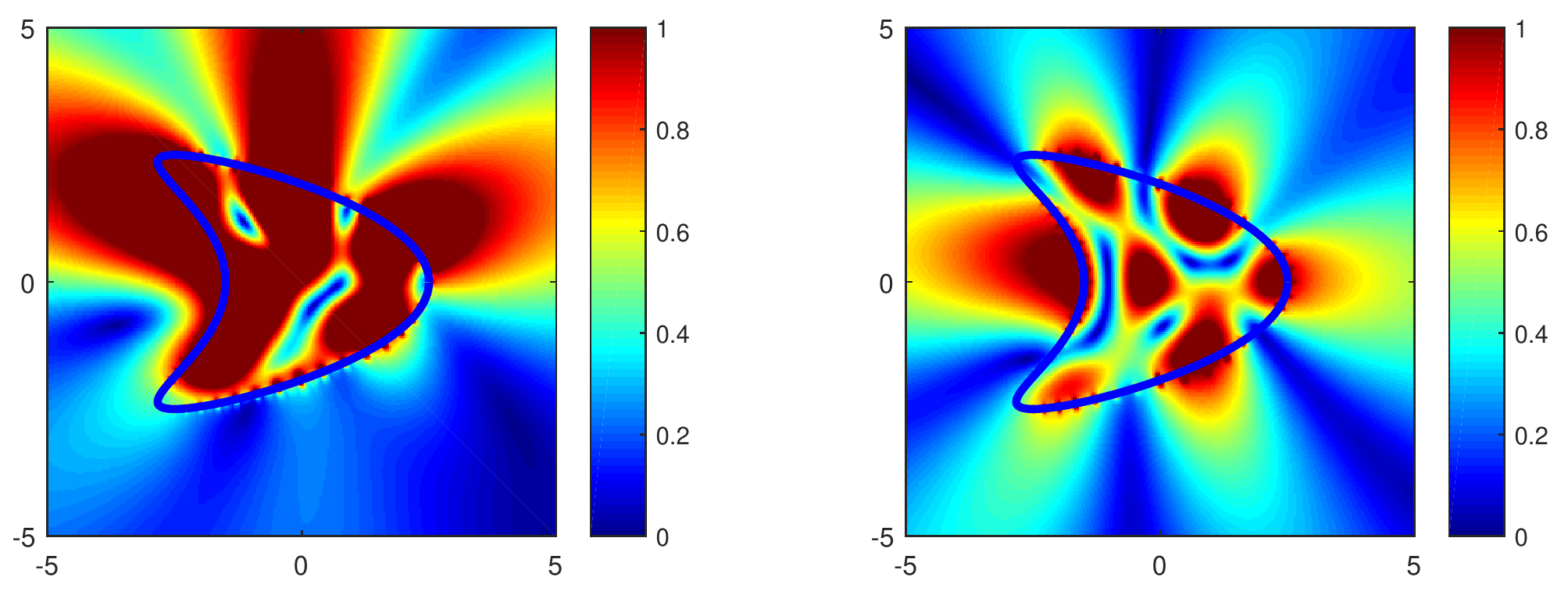}
\caption{The norms of the electric fields $u_0$ and $u_1$ (left) and of the magnetic fields $v_0$ and $v_1$ (right) for $\phi = \pi/2$. }\label{Fig4}
\end{center}
\end{figure}

\begin{figure}[p]
\begin{center}
\includegraphics[scale=0.55]{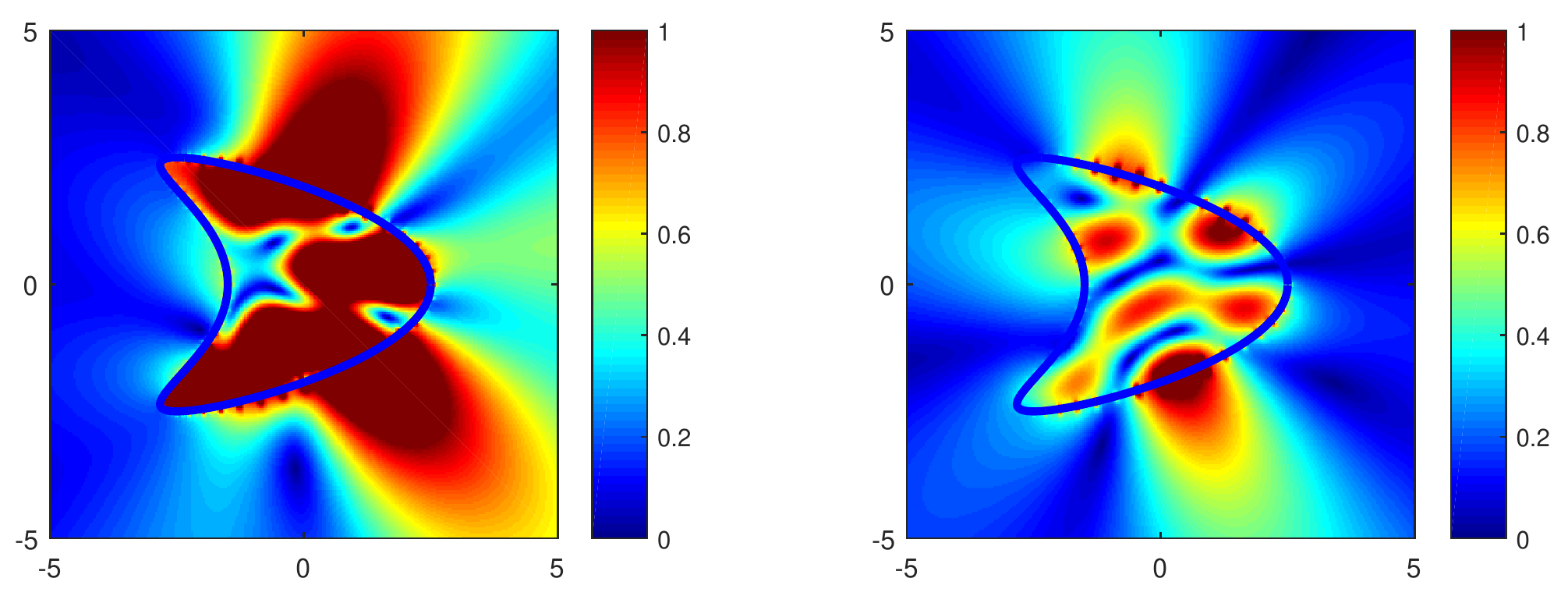}
\caption{The norms of the electric fields $u_0$ and $u_1$ (left) and of the magnetic fields $v_0$ and $v_1$ (right) for $\phi = \pi/9$. }\label{Fig5}
\end{center}
\end{figure}


\section*{Acknowledgements} 

The authors thank the referees for their valuable comments. This research was initiated while DG was visiting the Department of Mathematical Sciences, University of Delaware and he expresses his gratitude for its hospitality. His research was supported by the program of NTUA for sabbatical visits. 
     
\bibliographystyle{amsplain}

\end{document}